\documentclass[12pt,reqno]{amsart}
\usepackage{amsthm,amsmath,amssymb}
\usepackage{graphicx}
\usepackage{float}
\usepackage[colorlinks=true,
linkcolor=blue,
urlcolor=blue,
citecolor=blue]{hyperref}
\usepackage[T1]{fontenc}
\usepackage{mathtools}
\usepackage{relsize}
\usepackage{ytableau}
\usepackage{tikz}
\usepackage{enumerate}
\usepackage[toc,page]{appendix}
\usepackage{lscape}
\usepackage{multirow}
\usepackage{adjustbox,expl3,etoolbox} 
\usepackage[margin = 2.8cm]{geometry}
\usepackage{diagbox}
\usepackage{xcolor}
\usepackage{longtable}
\usepackage{comment}
\usepackage{amsaddr}
\usepackage{todonotes}
\usepackage{stmaryrd}
\usepackage{pdflscape}
\usepackage{tikz-cd}

\newtheorem{thm}{Theorem}[section]
\newtheorem{lem}[thm]{Lemma}
\newtheorem{prop}[thm]{Proposition}

\theoremstyle{definition}

\newtheorem{claim}{Claim}

\DeclareMathOperator\Aut{Aut}

\newcommand\sym[1]{\operatorname{Sym}(#1)}

\newcommand{\agl}[2]{\operatorname{AGL}_#1(#2)}
\newcommand{\pgl}[2]{\operatorname{PGL}_{#1}(#2)}

\newcommand{\pgammal}[2]{\operatorname{P\Gamma L}_#1(#2)}

\newcommand{\pg}[2]{\operatorname{PG}_{#1}(#2)}

\newcommand{\itbf}[1]{{\bf{{\emph{{#1}}}}}}

\begin{document}	
	
	\title[]{The automorphism group of the derangement graph of $\operatorname{PGL}_{2}(q)$ acting on the projective line}
	
	\author{Andriaherimanana Sarobidy Razafimahatratra}
	\address{Department of Mathematics and Computer Science, University of Lethbridge,\\
	Lethbridge, AB T1K 3M4, Canada}
	\email{\href{mailto:sarobidy@phystech.edu}{sarobidy@phystech.edu}}

	\subjclass[2010]{Primary 05E18; Secondary 05C35, 20B05}
	
	\keywords{Cayley graphs, derangement, permutation groups, M\"obius transformations, automorphism group}
	
	\date{September 25, 2026}
	
	\maketitle
	
	\begin{abstract}
		Given a finite transitive group $G\leq \operatorname{Sym}(\Omega)$, the derangement graph $\Gamma_G$ is the graph whose vertex set is $G$, and two vertices $g$ and $h$ are adjacent if the ratio $h^{-1}g$ is a fixed-point-free permutation. In this paper, we show that the automorphism group of the derangement graph of the transitive permutation group corresponding to the natural action of $\operatorname{PGL}_{2}(q)$ on the projective line $\operatorname{PG}_{1}(q)$ is
		\begin{align*}
			\operatorname{Aut}(\Gamma_{\operatorname{PGL}_{2}({q})}) = \left(L_{\operatorname{PGL}_2(q)}\times R_{\operatorname{PGL}_2(q)}\right) \rtimes \left(\langle \psi \rangle \times \gamma_{\operatorname{Aut}(\mathbb{F}_q)}\right),
		\end{align*} where $L_{\operatorname{PGL}_2(q)}$ is the left-regular representation of $\operatorname{PGL}_2(q)$, $R_{\operatorname{PGL}_2(q)}$ is the right-regular representation of $\operatorname{PGL}_2(q)$, $\gamma_{\operatorname{Aut}(\mathbb{F}_q)}$ is the group of conjugation by elements of $\operatorname{Aut}(\mathbb{F}_q)$, and $\psi: \operatorname{PGL}_2(q) \to \operatorname{PGL}_2(q)$ such that $\psi(x) = x^{-1}$. %As a by-product, we show that $\Gamma_{\operatorname{PGL}_2(q)}$ is not a normal Cayley graph.
	\end{abstract}
	
	\section{Introduction}
	
	Determining the full automorphism group of Cayley graphs is a central problem in algebraic graph theory. In general, the knowledge of the full automorphism group of a graph can help understand certain combinatorial structures associated with the graph. For instance, the articles \cite{Fang2002,Feng2006,Ganesan2013,Godsil1983,Imrich1976} are a few examples of the vast work in the literature where this problem is studied for  Cayley graphs. This paper is concerned with determining the full automorphism group of a Cayley graph associated with Erd\H{o}s-Ko-Rado type theorems in permutation groups.
	
	The Erd\H{o}s-Ko-Rado (EKR) theorem \cite{ERDOS1961} is an extremal set theory result on maximum intersecting families. Define $[n]: = \{1,2,\ldots,n\}$ for any integer $n\geq 1$ and let $1\leqslant k\leqslant n$. The EKR theorem asserts that if $n\geq 2k$ and $\mathcal{F}$ is a family of $k$-subsets of $[n]$ in which any two intersect (i.e., an intersecting family), then $|\mathcal{F}| \leqslant \binom{n-1}{k-1}$. Moreover, if $n>2k$, then equality holds in the latter if and only if there exists $a\in [n]$ such that $\mathcal{F} = \{ A\subset [n]: |A| = k \mbox{ and } a\in A \}$. These maximum-size families are known as \itbf{stars}, and they are conjugate orbits of the group $\sym{n-1}$ acting on the $k$-subsets of $[n]$. One of the many ways to prove the EKR theorem \cite[Chapter~1]{Godsil2015} is to model intersecting families to correspond to certain structures in a graph. Recall that the Kneser graph $K(n,k)$ is the graph whose vertex set is the collection of all $k$-subsets of $[n]$, and two $k$-subsets $A$ and $B$ are adjacent if and only if $A\cap B = \varnothing$. It is not hard to see that a family $\mathcal{F}$ of $k$-subsets of $[n]$ is intersecting if and only if it is a coclique\footnote{i.e., a subset of vertices in which no two are adjacent.} in $K(n,k)$. Using the fact from the EKR theorem that the maximum cocliques of $K(n,k)$ are stars, it can be shown that 
	\begin{align*}
		\Aut(K(n,k)) = \sym{n}, 
	\end{align*}
	whenever $n\geqslant 2k+1$; see \cite[Corollary~7.8.2]{Godsil2004}.
	
	There are many extensions of the EKR theorem to different combinatorial objects, as described in \cite{Godsil2015} for instance. In particular, the EKR theorem can be extended to permutations of a finite symmetric group. For any $n\geq 3$, we say that two permutations $\sigma$ and $\tau$ of $\sym{n}$ are intersecting if the exists $i\in [n]$ such that $\sigma(i) = \tau (i)$, and a subset $\mathcal{F} \subset \sym{n}$ is called \itbf{intersecting} if any two permutations in $\mathcal{F}$ are intersecting. To find the largest intersecting sets of $\sym{n}$, one can formalize the problem into a graph theoretic one. The \itbf{derangement graph} $\Gamma_n$ of $\sym{n}$ is the graph whose vertex set is $\sym{n}$, and two permutations $\sigma$ and $\tau$ are adjacent if $\sigma^{-1}\tau$ is a derangement, i.e., a fixed-point-free permutation. It is not hard to see that $\Gamma_n$ is in fact the Cayley graph $\operatorname{Cay}(\sym{n}, D_n)$, where $D_n = \left\{ \sigma\in \sym{n}: \sigma(i) \neq i, \mbox{ for all } i\in [n] \right\}$ is the set of all derangements of $\sym{n}$; (see \cite{Godsil2004} for the definition of a Cayley graph). A subset $\mathcal{F} \subset \sym{n}$ is intersecting if and only if $\mathcal{F}$ is a coclique of $\Gamma_n$, i.e., a subset of vertices in which no two are adjacent. It was shown in \cite{Cameron2003,Godsil2009,Larose2004} that the maximum cocliques of $\Gamma_n$ are sets of the form
	\begin{align*}
		S_{i,j} = \left\{ \sigma\in \sym{n}: \sigma(i) = j \right\},
	\end{align*}
	for some $i,j\in [n]$. These sets are exactly the cosets of a point stabilizer of the symmetric group $\sym{n}$. We say that $\sym{n}$ has the \itbf{strict-EKR property} in this case. Using this fact, Deng and Zhang showed in \cite{Deng2011} that 
	$
		\Aut(\Gamma_n) = \left(L_n \rtimes \operatorname{Inn}(\sym{n})\right)\rtimes \langle \phi\rangle,
	$
	where $\phi:\sym{n}\to \sym{n}$ is the automorphism such that $\phi(x) = x^{-1}$ and $L_n = \{ \lambda_\sigma\in \sym{n}^{\sym{n}}: \lambda_\sigma(x) = \sigma x, \mbox{ for }\sigma \in \sym{n} \}$ is the left-regular representation of $\sym{n}$. For $\sigma\in \sym{n}$, let $\rho_{\sigma}:\sym{n} \to \sym{n}$ be such that $\rho_{\sigma}(x) = x\sigma^{-1}$, for $x\in \sym{n}$. It is not hard to show that the automorphism group of $\Gamma_n$ can also be expressed as
	\begin{align*}
		\Aut(\Gamma_n) \cong \left(L_n\times R_n\right)\rtimes \langle \phi \rangle,
	\end{align*}
	where $R_n = \left\{ \rho_{\sigma}:\sigma\in \sym{n} \right\}$ is the right-regular representation of $\sym{n}$.

	The above EKR type problem on permutations can be further generalized to transitive subgroups of $\sym{n}$, i.e., transitive permutation groups. Given a finite transitive permutation group $G\leq \sym{\Omega}$, the \itbf{derangement graph $\Gamma_G$} of $G$ is the Cayley graph $\operatorname{Cay}(G,D_G)$, where $D_G = \{g\in G:g(\omega)\neq \omega, \mbox{ for all }\omega \in \Omega \}$. Then, it is not hard to see that a subset $\mathcal{F}\subset G$ is intersecting if and only if it is a coclique of $\Gamma_G$. We say that $G \leqslant \sym{\Omega}$ has the \itbf{EKR property} if $\alpha(\Gamma_G) = \frac{|G|}{|\Omega|}$. If the only cocliques of maximum size are cosets of a point stabilizer, i.e., subsets of the form $S_{\omega,\omega^\prime} = \{ g\in G: g(\omega) = \omega^\prime \}$ for some $\omega,\omega^\prime \in \Omega$, then we say that $G\leqslant \sym{\Omega}$ has the \itbf{strict-EKR property}. See \cite{Hujdurovic2022,Meagher2021,Pantangi2025} for some recent work on EKR and strict-EKR properties. As in the case of the original EKR theorem and the EKR theorem for the permutations of the symmetric group, it is natural to ask whether the full automorphism group of $\Gamma_G$ can be explicitly computed, provided that $G\leqslant \sym{\Omega}$ has the strict-EKR property. Deducing from computer search using the computer algebra system \verb*|Sagemath| \cite{sagemath}, the answer to this question seems to depend heavily on the structure the underlying permutation group, and not solely on the strict-EKR property. However, for the projective general linear group of degree two over finite fields equipped with its natural geometric action on the projective line, the full automorphism group of the derangement graph can be computed using the strict-EKR property and some geometric properties. We note that Meagher and Spiga showed in \cite{Meagher2011} that this action of $\pgl{2}{q}$ admits the strict-EKR property. We state our main result below.
	\begin{thm}
		For any power $q$ of a prime number, we have
		\begin{align*}
			\Aut(\Gamma_{\pgl{2}{q}}) = \left(L_{\pgl{2}{q}}\times R_{\pgl{2}{q}}\right)\rtimes \left(\langle  \psi \rangle \times \gamma_{\Aut(\mathbb{F}_q)}\right),
		\end{align*}
		where:
		\begin{enumerate}[$\bullet$]
			\item $L_{\pgl{2}{q}}$ is the left-regular representation of $\pgl{2}{q}$,
			\item $R_{\pgl{2}{q}}$ is the right-regular representation of $\pgl{2}{q}$,
			\item $\gamma_{\Aut(\mathbb{F}_q)}$ is the group of conjugation by elements of the field automorphism $\Aut(\mathbb{F}_q)$, and
			\item $\psi:\pgl{2}{q} \to \pgl{2}{q}$ is the anti-automorphism mapping an element to its inverse.
		\end{enumerate}
		\label{thm:main}
	\end{thm}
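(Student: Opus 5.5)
The plan is to show both containments, working throughout with $G=\pgl{2}{q}$ acting on $\Omega=\pg{1}{q}$, so that $|\Omega|=q+1$ and $|G|=q(q^2-1)$.

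\emph{The easy containment.} We first check that each listed map is an automorphism of $\Gamma_G=\operatorname{Cay}(G,D_G)$, where $g\sim h$ if and only if $h^{-1}g\in D_G$.
\begin{enumerate}[$\bullet$]
\item Left translations are automorphisms of any Cayley graph.
\item Right translations are automorphisms because $D_G$ is a union of conjugacy classes.
\item $\psi$ is an automorphism because $D_G$ is closed under inversion and $(h^{-1})^{-1}g^{-1}$ is conjugate to $(g^{-1}h)^{-1}$.
\item A field automorphism $\sigma$ induces a permutation of $\Omega$ that normalizes $G$, so conjugation by $\sigma$ maps derangements to derangements.
\end{enumerate}
Next we verify that these generate a group of the stated shape. $L_G$ and $R_G$ commute and meet trivially, since $Z(G)=1$. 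The element $\psi$ swaps $L_G$ and $R_G$, and $\gamma_\sigma$ normalizes each of them. Finally, $\psi$ commutes with $\gamma_\sigma$, and $\langle\psi\rangle\times\gamma_{\Aut(\mathbb F_q)}$ meets $L_G\times R_G$ trivially. The last point holds because a nontrivial element of this complement that fixes the identity vertex is not an inner automorphism. For $\gamma_\sigma$ this follows from $\pgammal{2}{q}/\pgl{2}{q}\cong\Aut(\mathbb F_q)$. For $\psi\gamma_\sigma$ it follows because $G$ is nonabelian, so inversion is not an automorphism.

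\emph{The reverse containment.} Let $A=\Aut(\Gamma_G)$. Since $L_G\le A$ is regular, it suffices to show that the vertex stabilizer $A_1$ is contained in $\langle \mathrm{Inn}(G),\psi\rangle\rtimes\gamma_{\Aut(\mathbb F_q)}$. By the strict-EKR theorem of Meagher and Spiga, the maximum cocliques of $\Gamma_G$ are exactly the $(q+1)^2$ sets $S_{\omega,\omega'}$. Thus every $\varphi\in A$ permutes them, and we obtain an action of $A$ on the grid $\Omega\times\Omega$. We then use the structure of their intersections: $S_{\omega,\omega'}\cap S_{\omega,\omega''}=\varnothing$ for $\omega'\neq\omega''$, while $S_{\omega_1,\omega_1'}\cap S_{\omega_2,\omega_2'}$ has size $q-1$ when $\omega_1\ne\omega_2$ and $\omega_1'\ne\omega_2'$. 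Hence the disjointness graph on the cocliques is the rook graph $K_{q+1}\square K_{q+1}$. Its automorphism group is $(\sym{\Omega}\times\sym{\Omega})\rtimes C_2$, so $\varphi$ either preserves or swaps the rows and columns. Composing with $\psi$, which swaps $S_{\omega,\omega'}$ with $S_{\omega',\omega}$, we may assume that $\varphi$ induces a pair $(\alpha,\beta)\in\sym{\Omega}^2$ with $\varphi(S_{\omega,\omega'})=S_{\alpha\omega,\beta\omega'}$. Since $g\in G$ is determined by the set $\{S_{\omega,g\omega}\}$ of cocliques containing it, we get $\varphi(g)=\beta g\alpha^{-1}$. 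So $\varphi$ is determined by $(\alpha,\beta)$, and $\beta G\alpha^{-1}=G$.

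\emph{Main obstacle.} The hardest step is to show that any $\alpha,\beta\in\sym{\Omega}$ with $\beta G\alpha^{-1}=G$ satisfy $\alpha,\beta\in\pgammal{2}{q}$ and $\alpha\beta^{-1}\in G$; the latter condition is what puts them in the right coset. Fixing the identity vertex gives $\beta\alpha^{-1}\in G$, so after adjusting by $G$ we have $\beta=\alpha$ and $\alpha G\alpha^{-1}=G$. Therefore $\alpha\in N_{\sym{\Omega}}(G)$. It then remains to invoke the standard fact that $N_{\sym{q+1}}(\pgl{2}{q})=\pgammal{2}{q}$, which follows because $\Aut(\pgl{2}{q})=\pgammal{2}{q}$ and $G$ acts on $\Omega$ as on the cosets of a Borel subgroup. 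Small exceptional $q$, such as $q=2,3$ where $G$ is $\sym{3}$ or $\sym{4}$, must be checked directly. Consequently $\varphi$ lies in $\mathrm{Inn}(G)\rtimes\gamma_{\Aut(\mathbb F_q)}$, up to the factor $\psi$, and the order count $|A|=2|G|^2|\Aut(\mathbb F_q)|$ completes the proof.
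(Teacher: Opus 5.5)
Your proof is correct. It shares the paper's overall skeleton: the strict-EKR theorem of Meagher--Spiga, the disjointness graph on the sets $S_{u,v}$ being $K_{q+1}\square K_{q+1}$, using $\psi$ to remove the coordinate swap, and finally $N_{\sym{\pg{1}{q}}}(\pgl{2}{q})=\pgammal{2}{q}$. The central step, however, is done by a different and more direct argument.

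The paper proves that the coordinate permutations $\sigma,\tau$ normalize $\pgl{2}{q}$ by pushing the four-fold intersections $\bigl|\bigcap_{i=1}^4 S_{u_i,u_i}\bigr|=1$ through $\phi^{-1}$, $\rho_g$ and $\phi$, and then appealing to the cross-ratio characterization (Theorem~\ref{thm:cross-ratio}). It then needs Claim~\ref{claim2} separately to force the field-automorphism parts of the two coordinates to agree.

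You instead apply $\varphi$ to $\{g\}=\bigcap_{\omega}S_{\omega,g\omega}$, the identity already used in Lemma~\ref{lem:faithful1}, and read off $\varphi(g)=\beta g\alpha^{-1}$. That single observation gives both $\beta\alpha^{-1}=\varphi(1)\in G$ and $\alpha G\alpha^{-1}=G$, so it replaces both of the paper's claims. It needs no cross-ratios, and it works uniformly even for $q=2$, where there are no four distinct points. It also makes explicit the identification of $\varphi$ with $x\mapsto\beta x\alpha^{-1}$, which the paper uses only tacitly when it passes from $K$ to $\Phi^{-1}(K)\leqslant L_{\pgammal{2}{q}}\times R_{\pgammal{2}{q}}$. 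Finally, your argument generalizes verbatim: for any $2$-transitive $G$ with the strict-EKR property, it reduces $\Aut(\Gamma_G)$ to computing $N_{\sym{\Omega}}(G)$. The paper's route, by contrast, is tied to the cross-ratio geometry of $\pg{1}{q}$.

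One small slip in the easy direction: $hg^{-1}$ is conjugate to $g^{-1}h=(h^{-1}g)^{-1}$, not in general to $(g^{-1}h)^{-1}=h^{-1}g$. Closure of $D_G$ under inversion then gives what you intended.
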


	A Cayley graph $X$ on a group $G$ is a \itbf{normal Cayley graph} if the group of left-regular representations $L_G$ of $G$ is a normal subgroup of $\Aut(X)$. It is an easy exercise to show that $\psi L_{\pgl{2}{q}} \psi^{-1} = R_{\pgl{2}{q}}$. Therefore, $\psi$ does not normalize $L_{\pgl{2}{q}}$, and we deduce the following. 
	\begin{prop}
		The graph $\Gamma_{\pgl{2}{q}}$ is not a normal Cayley graph.
	\end{prop}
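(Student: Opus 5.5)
To prove that $\Gamma_{\pgl{2}{q}}$ is not a normal Cayley graph, I would show that $\psi$ is an automorphism of $\Gamma_{\pgl{2}{q}}$ lying outside the normalizer of $L_{\pgl{2}{q}}$ in $\Aut(\Gamma_{\pgl{2}{q}})$. Theorem~\ref{thm:main} gives $\psi \in \Aut(\Gamma_{\pgl{2}{q}})$, but I would also check it directly so the proposition does not depend on the full theorem. Write $G = \pgl{2}{q}$ and $D = D_G$. Then $g \sim h$ exactly when $h^{-1}g \in D$. Observe that $\psi(g) \sim \psi(h)$ means $h g^{-1} \in D$. Now $hg^{-1} = h(h^{-1}g)^{-1}h^{-1}$, which is conjugate to the inverse of $h^{-1}g$. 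The set $D$ is closed under inverses and under conjugation by $G$, since a derangement stays fixed-point-free under both operations. Hence $\psi$ preserves adjacency, and since $\psi$ is a bijection of order two, $\psi \in \Aut(\Gamma_G)$.

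Next I would compute $\psi\lambda_\sigma\psi^{-1}$ for the left translation $\lambda_\sigma(x) = \sigma x$. For any $x$,
\begin{align*}
\psi\lambda_\sigma\psi(x) = (\sigma x^{-1})^{-1} = x\sigma^{-1} = \rho_\sigma(x),
\end{align*}
so $\psi L_G \psi^{-1} = R_G$.

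The remaining step is to show $R_G \neq L_G$ as subgroups of $\sym{G}$. Suppose $\rho_\sigma = \lambda_\tau$ for some $\sigma, \tau$. Evaluating at the identity gives $\tau = \sigma^{-1}$. Then $x\sigma^{-1} = \sigma^{-1}x$ for all $x$, so $\sigma$ is central. Since $\pgl{2}{q}$ has trivial center for every prime power $q$ (its socle $\operatorname{PSL}_2(q)$ acts faithfully and the group is not abelian; for $q=2,3$ it is $\sym{3}$ or $\sym{4}$), $\sigma = 1$. Thus $L_G \cap R_G = 1$, and in particular $R_G \neq L_G$ because $G \neq 1$.

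Combining these, $\psi L_G \psi^{-1} = R_G \neq L_G$, so $L_G$ is not normal in $\Aut(\Gamma_G)$, and the graph is not a normal Cayley graph. No step is a genuine obstacle. The only points needing care are the conjugation-and-inverse invariance of $D_G$ and the trivial center of $\pgl{2}{q}$.
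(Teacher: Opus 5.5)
Your proof is correct and follows the paper's route. The paper notes that $\psi\in\Aut(\Gamma_{\pgl{2}{q}})$ and that $\psi L_{\pgl{2}{q}}\psi^{-1}=R_{\pgl{2}{q}}\neq L_{\pgl{2}{q}}$, leaving both as exercises; you verify them directly, using the invariance of $D_G$ under inversion and conjugation, and $L_G\cap R_G=1$. For $R_G\neq L_G$ it is enough that $G$ is nonabelian, so your somewhat loose justification that the centre is trivial is not actually needed.
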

	
	\subsection*{Notations and conventions}
	In the proof of Theorem~\ref{thm:main}, we will adopt the following notations. Let $q = p^k$ be the power of a prime $p$, where $k\geqslant 1$ is an integer. The finite field on $q$ elements is denoted by $\mathbb{F}_q$. We denote the projective line over $\mathbb{F}_q$ by $\pg{1}{q}$, i.e., the set of all $1$-dimensional subspaces of $\mathbb{F}_q^2$. The elements of $\pg{1}{q}$ are the subspaces
	\begin{align*}
		\left\langle 
		\begin{bmatrix}
			1 \\
			x
		\end{bmatrix}
		\right \rangle, \ x\in \mathbb{F}_q
		\mbox{ and }
		\left\langle 
		\begin{bmatrix}
			0\\
			1
		\end{bmatrix}
		\right\rangle .
	\end{align*}
	Thus, we may view $\pg{1}{q}$ as $\mathbb{F}_q$ with an additional point at infinity, i.e., $\pg{1}{q} = \mathbb{F}_q\cup \{\infty\}$.
	
	All groups considered in this paper are finite, and all graphs are simple and undirected. All groups actions are left actions, that its, a group $G$ with identity element $1$ acts on a set $\Omega$ if there exists a map $\alpha:G\times \Omega \to \Omega$ such that for any $g,g^\prime \in G$, we have $\alpha(gg^\prime,\omega) = \alpha(g,\alpha(g^\prime,\omega))$ and $\alpha(1,\omega) =\omega$, for $\omega\in \Omega$. The group $G$ acts on $\Omega$ faithfully if $\{ g\in G: \alpha(g,\omega) = \omega , \forall \omega\in\Omega \} = \{1\}$.
	
	Given a group $G$, we define the maps $\lambda_g: G\to G$ and $\rho_g: G\to G$ such that $\lambda_g(x) = gx$ and $\rho_g(x) = xg^{-1}$, for $x\in G$. The sets $L_G = \{ \lambda_g:g\in G \}$ and $R_G = \{ \rho_g: g\in G \}$ are subgroups of $\sym{G}$, and they are respectively called the \itbf{left-regular representation} and the \itbf{right-regular representation} of the group $G$. If $K$ is an overgroup containing $G$ as a normal subgroup, then for any $k\in K$, let $\gamma_k: G \to G$ such that $\gamma_k(x) = kxk^{-1}$ for $x\in G$. It is not hard to see that $\gamma_k = \lambda_k\rho_k$ for any $k\in K$. The group $L_G$ and $R_G$ commute, and if $Z(G)$ is trivial, then $\langle L_G, R_G \rangle = L_G\times R_G$. See \cite{Herman2025} for details.

	\section{The projective general linear groups of degree two}
		
	The projective general linear group $\pgl{2}{q}$ is the group of all \itbf{M\"obius transformations}, that is, transformations of the form $h: \pg{1}{q} \to \pg{1}{q}$ such that
	\begin{align*}
		h(z) = \frac{a+ bz}{c+dz}
	\end{align*}
	where $ad-cd \neq 0$, and with the rules: $\frac{1}{\infty} = 0$, $\frac{c}{0} = \infty$ for $c\neq 0$ and $h(\infty) = \frac{b}{d}$.

	The group $\pgl{2}{q}$ acts transitively on $\pg{1}{q}$. The stabilizer of $\infty\in \pg{1}{q}$ in this action of $\pgl{2}{q}$ is a Borel subgroup of $\pgl{2}{q}$, which is therefore of order $q(q-1)$, isomorphic to $\agl{1}{q}$. This subgroup acts transitively on the remaining $q$ points of $\pg{1}{q}$, so the action of $\pgl{2}{q}$ on $\pg{1}{q}$ is also $2$-transitive\footnote{that is, it is also transitive on the set of ordered pairs of elements of $\pg{1}{q}$ with distinct entries}. In addition, it is well known that this action of $\pgl{2}{q}$ is sharply $3$-transitive, so for every triples $(x,y,z)$ and $(x^\prime,y^\prime,z^\prime)$ of elements of $\pg{1}{q}$ with distinct entries, there exists a unique $g\in \pgl{2}{q}$ such that $(g(x),g(y),g(z)) = (x^\prime,y^\prime,z^\prime)$.

	Given distinct $x,y,z,t\in \pg{1}{q}$, their \itbf{cross-ratio} is 
	\begin{align*}
		\left(x,y;z,t\right): = \frac{(z-x)(t-y)}{(z-y)(t-x)}.
	\end{align*}
	The cross-ratio is an invariant of $\pgl{2}{q}$ in the sense that if $g\in \pgl{2}{q}$, then $(g(x),g(y);g(z),g(t)) = (x,y;z, t)$ for any distinct $x,y,z,t\in \pg{1}{q}$. In fact, this invariant can be used to determine whether an element of $\sym{\pg{1}{q}}$ is in $\pgl{2}{q}$ or not, as follows.
	
	\begin{thm}
		A permutation of $\pg{1}{q}$ belongs to $\pgl{2}{q} $ if and only if it preserves the cross-ratio of any four distinct elements of $\pg{1}{q}$.\label{thm:cross-ratio}
	\end{thm}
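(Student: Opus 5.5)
One direction is already recorded above: cross-ratio is invariant under $\pgl{2}{q}$. So every element of $\pgl{2}{q}$ preserves the cross-ratio of any four distinct points. We will not recheck this. For $q\le 3$ the projective line has at most $4$ points, and sharp $3$-transitivity shows $\pgl{2}{q}=\sym{\pg{1}{q}}$ for $q=2$; for $q=3$, $|\pgl{2}{3}|=24=|\sym{4}|$, so there is nothing to prove. Hence we assume $q\ge 4$ and prove the converse.

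Let $\sigma\in\sym{\pg{1}{q}}$ preserve all cross-ratios of quadruples of distinct points. We normalize it by sharp $3$-transitivity. Let $g\in\pgl{2}{q}$ be the unique element with
\begin{align*}
(g(\sigma(\infty)),g(\sigma(0)),g(\sigma(1)))=(\infty,0,1).
\end{align*}
Then $\tau:=g\sigma$ fixes $\infty,0,1$. It also preserves cross-ratios, since both $g$ and $\sigma$ do. It therefore suffices to show that $\tau$ is the identity, because then $\sigma=g^{-1}\in\pgl{2}{q}$.

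The main step is to compute the cross-ratio of the quadruple $(\infty,0;1,t)$ for $t\in\mathbb{F}_q\setminus\{0,1\}$, using the conventions for $\infty$. From the formula,
\begin{align*}
(x,y;z,t)=\frac{(z-x)(t-y)}{(z-y)(t-x)} .
\end{align*}
With $x=\infty$, the factors $z-x$ and $t-x$ cancel in the limit. This leaves $(\infty,0;1,t)=\frac{t-0}{1-0}=t$. We will state this convention explicitly, interpreting the formula projectively with the $\infty$ terms cancelling. Invariance then gives
\begin{align*}
t=(\infty,0;1,t)=(\tau(\infty),\tau(0);\tau(1),\tau(t))=(\infty,0;1,\tau(t))=\tau(t).
\end{align*}
So $\tau$ fixes every point of $\mathbb{F}_q\setminus\{0,1\}$ as well as $\infty,0,1$. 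Hence $\tau=\mathrm{id}$.

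The only delicate point is the treatment of $\infty$ inside the cross-ratio formula. If one prefers to avoid limits, we can avoid $\infty$ altogether. Since $q\ge 4$, there is a finite point $a\notin\{0,1\}$, and we normalize $\tau$ to fix $0,1,a$ instead. For $t\in\mathbb{F}_q\setminus\{0,1,a\}$, the quantity $(0,1;a,t)$ is a Möbius function of $t$, namely $t\mapsto \frac{a(t-1)}{(a-1)t}$, and this function is injective. Hence $\tau(t)=t$ for every such $t$. Since $\tau$ fixes $0,1,a$ and every other finite point, it permutes the remaining point set $\{\infty\}$ and so fixes $\infty$. Either route shows that $\tau$ is the identity, which completes the proof.
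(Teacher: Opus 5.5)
The paper does not prove this statement; it quotes it as a classical fact, so your argument supplies a proof the paper omits rather than paralleling one. Your first route is complete and correct. It uses the convention $(\infty,y;z,t)=\frac{t-y}{z-y}$, which the statement implicitly needs since it quantifies over quadruples containing $\infty$, and which is the standard projective convention under which the paper's asserted $\pgl{2}{q}$-invariance holds. With it, the argument runs as follows:
\begin{enumerate}[$\bullet$]
\item sharp $3$-transitivity gives $g$ with $\tau=g\sigma$ fixing $\infty,0,1$;
\item then $\tau(t)\in\mathbb{F}_q\setminus\{0,1\}$ whenever $t\in\mathbb{F}_q\setminus\{0,1\}$;
\item the identity $(\infty,0;1,t)=t$ then forces $\tau(t)=t$.
\end{enumerate}
This argument also covers $q=2,3$ directly, since a permutation of a $3$- or $4$-point set fixing three points is the identity. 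The separate small-$q$ discussion is therefore unnecessary, though harmless. What your approach buys is self-containedness: the theorem follows from sharp $3$-transitivity and the single identity $(\infty,0;1,t)=t$, with no appeal to the Fundamental Theorem of Projective Geometry or other outside results.

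Your second, ``$\infty$-free'' route does not do what you claim. After normalizing $\tau$ to fix $0,1,a$, nothing yet prevents $\tau(t)=\infty$ for some finite $t\notin\{0,1,a\}$. In that case the equality $(0,1;a,t)=(0,1;a,\tau(t))$ involves $(0,1;a,\infty)$, which again requires the convention for $\infty$. Your closing sentence, that $\tau$ ``permutes the remaining point set $\{\infty\}$,'' presupposes that $\tau$ maps finite points to finite points, which is part of what must be shown. The repair is to use the convention: $(0,1;a,\infty)=\frac{a}{a-1}=M(\infty)$ for $M(t)=\frac{a(t-1)}{(a-1)t}$, and $M$ is injective on all of $\pg{1}{q}$. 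But then this route is no more convention-free than the first. Since the first route stands on its own, your proof is correct; just drop or repair the claim that $\infty$ can be avoided.
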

	
		By sharp $3$-transitivity, the group $\pgl{2}{q}$ acts intransitively on the set all quadruples of $\pg{1}{q}$ with distinct entries, whenever $q\geqslant 4$.
	Given two quadruples with distinct entries $(u,v,w,t)$ and $(u^\prime,v^\prime,w^\prime,t^\prime)$ of $\pg{1}{q}$, there exists at most one element of $\pgl{2}{q}$ mapping one to the other. If $g\in \pgl{2}{q}$ is such that $(g(u),g(v),g(w),g(t)) = (u^\prime,v^\prime,w^\prime,t^\prime)$, then by Theorem~\ref{thm:cross-ratio} we have the cross-ratio equality 
	\begin{align*}
		(u,v;w,t) = (u^\prime,v^\prime;w^\prime,t^\prime).
	\end{align*}
	
	Denote the Frobenius automorphism of $\mathbb{F}_q$ by $\theta$, that is, $\theta: \mathbb{F}_q \to \mathbb{F}_q$ such that $\theta(x) = x^p$ for all $x\in \mathbb{F}_q$. It is well known that $\Aut(\mathbb{F}_q) = \Aut(\mathbb{F}_{p^k}/\mathbb{F}_p) = \langle \theta \rangle \cong \mathbb{Z}_k$. The group $\Aut(\mathbb{F}_q)$ acts as automorphism of $\pgl{2}{q}$ via the following action. For any $h: z\to \frac{a+bz}{c+dz}$ in $\pgl{2}{q}$, we have
	\begin{align*}
		\theta\cdot h: z\mapsto \frac{\theta(a) + \theta(b)z}{\theta(c) + \theta(d)z}. 
	\end{align*}
	Then, the projective semilinear group of degree $2$ over $\mathbb{F}_q$ is the group denoted by $\pgammal{2}{q} = \pgl{2}{q} \rtimes \Aut(\mathbb{F}_q) = \pgl{2}{q}\rtimes \langle \theta\rangle$ with multiplication
	\begin{align*}
		(g\theta^i)(h\theta^j) = g (\theta^i\cdot h) \theta^{i+j}
	\end{align*}
	for any $g,h\in \pgl{2}{q}$ and $0\leqslant i,j\leqslant k-1$. We note that by the property of the semi-direct product, we have $\theta \cdot h = \theta h \theta^{-1}$, for any $h\in \pgl{2}{q}$.
	
	Next, we state an important theorem that will be used in the proof of the main result. This result can be derived from the Fundamental Theorem of Projective Geometry \cite[Chapter~2(ii)]{Hirschfeld1998} or from a group theoretic result from \cite[Page~245]{Dixon1996}.
	\begin{thm}
		If $q = p^k$, then 
		$
			\operatorname{N}_{\sym{\pg{1}{q}}}(\pgl{2}{q}) = \pgammal{2}{q}.
		$
		\label{thm:FTPG}
	\end{thm}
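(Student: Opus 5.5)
The proof has two inclusions. The easy one is $\pgammal{2}{q}\leqslant \operatorname{N}_{\sym{\pg{1}{q}}}(\pgl{2}{q})$, where $\theta$ is regarded as the permutation $z\mapsto z^p$ of $\pg{1}{q}$ fixing $\infty$. A direct computation shows that for $h: z\mapsto \frac{a+bz}{c+dz}$ we have $\theta h\theta^{-1} = \theta\cdot h$, because $\theta$ is a field automorphism. So $\theta$ normalizes $\pgl{2}{q}$, and hence so does all of $\pgammal{2}{q}$. The real content is the reverse inclusion. The plan is to reduce an arbitrary normalizing permutation to one fixing $\infty,0,1$, and then show that such a permutation is a field automorphism.

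Let $\sigma\in \sym{\pg{1}{q}}$ normalize $G=\pgl{2}{q}$. By sharp $3$-transitivity there is $g\in G$ with $g\sigma$ fixing $\infty$, $0$ and $1$. Replacing $\sigma$ by $g\sigma$, which still normalizes $G$, we may assume $\sigma$ fixes these three points. Two general facts drive the argument. First, conjugation by $\sigma$ maps the stabilizer $G_x$ to $G_{\sigma(x)}$. Second, for $h\in G$ the permutation $\sigma h\sigma^{-1}$ has the same number of fixed points as $h$. Consequently $\sigma$ normalizes $G_\infty=\{z\mapsto az+b\}$ and $G_{0,\infty}=\{m_a: z\mapsto az : a\neq 0\}$. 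It also normalizes the translation group $T=\{t_b: z\mapsto z+b\}$. Indeed, $T$ consists of the identity together with exactly those elements of $G_\infty$ that have a unique fixed point: a non-identity element $z\mapsto az+b$ with $a\neq 1$ fixes a point of $\mathbb{F}_q$ as well as $\infty$, whereas a non-trivial translation fixes only $\infty$.

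Next we read off the field operations. Since $\sigma t_b\sigma^{-1}\in T$ sends $0=\sigma(0)$ to $\sigma(b)$, we get $\sigma t_b\sigma^{-1}=t_{\sigma(b)}$. Therefore
\[
\sigma(b+c)=\sigma t_b t_c(0)=t_{\sigma(b)}t_{\sigma(c)}(0)=\sigma(b)+\sigma(c).
\]
Similarly, $\sigma m_a\sigma^{-1}\in G_{0,\infty}$ sends $1$ to $\sigma(a)$, so it equals $m_{\sigma(a)}$, and then $\sigma(ab)=\sigma(a)\sigma(b)$ for $a,b\neq 0$. Since $\sigma$ also fixes $0$, the restriction of $\sigma$ to $\mathbb{F}_q$ is a bijective ring homomorphism, i.e. an element $\theta^i$ of $\Aut(\mathbb{F}_q)$. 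As $\sigma(\infty)=\infty$, $\sigma$ coincides with $\theta^i$ on all of $\pg{1}{q}$, so $\sigma\in\pgammal{2}{q}$. Undoing the reduction gives $\sigma\in g^{-1}\pgammal{2}{q}=\pgammal{2}{q}$.

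The step requiring the most care is identifying $T$ and $G_{0,\infty}$ intrinsically, that is, purely through fixed-point data, so that they are visibly preserved by conjugation by $\sigma$. Once this is done the rest is formal. The small cases $q=2,3$ need no separate treatment: the argument goes through unchanged, and there $\Aut(\mathbb{F}_q)$ is trivial. Alternatively, one could invoke Theorem~\ref{thm:cross-ratio} to show that $\sigma$ transports cross-ratios via a field automorphism, but the direct argument above avoids this.
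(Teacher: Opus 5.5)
Your proof is correct, and it takes a different route from the paper. The paper does not prove this statement; it cites the Fundamental Theorem of Projective Geometry (Hirschfeld) or a group-theoretic result in Dixon--Mortimer. You instead give a self-contained proof. After multiplying $\sigma$ by an element of $\pgl{2}{q}$ so that it fixes $\infty,0,1$, you use only two facts: conjugation by $\sigma$ carries $G_x$ to $G_{\sigma(x)}$, and it preserves fixed-point counts. These let you characterize the translation group $T$ and the torus $G_{0,\infty}$ intrinsically inside $G_\infty\cong\agl{1}{q}$. Since $T$ and $G_{0,\infty}$ act regularly on $\mathbb{F}_q$ and $\mathbb{F}_q^{*}$, the identities $\sigma t_b\sigma^{-1}=t_{\sigma(b)}$ and $\sigma m_a\sigma^{-1}=m_{\sigma(a)}$ make $\sigma$ additive and multiplicative, hence a field automorphism. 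In effect you recover the field structure from the affine point stabilizer, which is the standard elementary proof.

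The citation buys brevity and places the result in the general theory of semilinear maps. Your route proves exactly the one-dimensional statement needed, using nothing beyond sharp $3$-transitivity. It also avoids a subtlety in the geometric citation. The classical Fundamental Theorem concerns collineations of projective spaces of dimension at least $2$, whereas on the projective line every bijection trivially preserves incidence. Deriving the normalizer statement from it therefore needs an additional, line-specific ingredient, which your argument supplies directly.
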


	\section{Proof of Theorem~\ref{thm:main}}
	
	In this section, we give a proof to Theorem~\ref{thm:main}. We first show that $\Aut(\Gamma_{\pgl{2}{q}})$ contains a large subgroup isomorphic to $\left(L_{\pgl{2}{q}} \times R_{\pgl{2}{q}}\right) \rtimes \langle \psi, \gamma_\theta\rangle$ in \S~\ref{subsect1}. The rest of the proof is building up to showing that this subgroup is in fact equal to $\Aut(\Gamma_{\pgl{2}{q}})$. In \S~\ref{subsect2}, we show that $\Aut(\Gamma_{\pgl{2}{q}})$ embed into $\sym{\pg{1}{q}\times \pg{1}{q}}$, and use this fact along with the intersection sizes to embed $\Aut(\Gamma_{\pgl{2}{q}})$ into the automorphism group of the $(q+1)\times (q+1)$ grid graph or the Hamming graph $H(2,q+1)$ in \S~\ref{subsect3}. Using all these results, we give a proof to Theorem~\ref{thm:main} in \S~\ref{subsect5}. 
	\subsection{A large subgroup of automorphism}\label{subsect1}
	We will show that $\Aut(\Gamma_{\pgl{2}{q}})$ admits a large subgroup, which turns out to be the full automorphism group.
	
	Since $\Gamma_{\pgl{2}{q}}$ is a (left) Cayley graph, the celebrated result of Sabidussi \cite{Sabidussi1958} asserts that $L_{\pgl{2}{q}}\leqslant \Aut(\Gamma_{\pgl{2}{q}})$. The right-regular representation of need not be contained in the automorphism group of a Cayley graph, in general. However, in the case of $\Gamma_{\pgl{2}{q}}$, the connection set is a union of conjugacy classes, and this implies $R_{\pgl{2}{q}}\leqslant \Aut(\Gamma_{\pgl{2}{q}})$. As $\pgl{2}{q}$ is centreless, we have $\langle L_{\pgl{2}{q}},R_{\pgl{2}{q}} \rangle = L_{\pgl{2}{q}} \times R_{\pgl{2}{q}}$. See \cite[\S~3.1]{Herman2025} for details. The same argument can be used to show that $\langle L_{ \pgammal{2}{q}},R_{\pgammal{2}{q}}\rangle = L_{ \pgammal{2}{q}} \times R_{\pgammal{2}{q}}$, but as we will see later, not every element of this group is an automorphism of $\Gamma_{\pgl{2}{q}}$.

	\begin{lem}
		We have $\Aut (\mathbb{F}_q)\cong \langle \gamma_{\theta}\rangle \leqslant \Aut(\Gamma_{\pgl{2}{q}})$.
	\end{lem}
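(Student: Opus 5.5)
The plan is to show that $\gamma_\theta : x \mapsto \theta x \theta^{-1}$, which is a permutation of $\pgl{2}{q}$ because $\pgl{2}{q}$ is normal in $\pgammal{2}{q}$, preserves adjacency in $\Gamma_{\pgl{2}{q}}$. After that, I will check that $\theta \mapsto \gamma_\theta$ is an injective homomorphism from $\Aut(\mathbb{F}_q) = \langle\theta\rangle$ into $\Aut(\Gamma_{\pgl{2}{q}})$.

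For adjacency, take $g,h\in\pgl{2}{q}$. Since $\gamma_\theta$ is a group automorphism of $\pgl{2}{q}$, we have
\begin{align*}
\gamma_\theta(h)^{-1}\gamma_\theta(g) = \gamma_\theta(h^{-1}g).
\end{align*}
So it suffices to show that $\gamma_\theta$ maps the derangement set $D_{\pgl{2}{q}}$ onto itself. The key point is that $\theta$, acting on $\pg{1}{q}$ by $z\mapsto z^p$ (with $\infty\mapsto\infty$), is a permutation of $\pg{1}{q}$. Inside $\sym{\pg{1}{q}}$ the semidirect product action $\theta\cdot h$ is realized as genuine conjugation $\theta h\theta^{-1}$, which one verifies directly:
\begin{align*}
\theta h \theta^{-1}(z) = \left(\frac{a+b z^{1/p}}{c+d z^{1/p}}\right)^p = \frac{\theta(a)+\theta(b)z}{\theta(c)+\theta(d)z},
\end{align*}
including the boundary cases at $\infty$. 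Conjugation by a permutation preserves cycle type, and in particular the number of fixed points: $h(\omega)=\omega$ if and only if $\theta h\theta^{-1}(\theta(\omega))=\theta(\omega)$. Hence $h$ is a derangement if and only if $\gamma_\theta(h)$ is, so $\gamma_\theta(D_{\pgl{2}{q}})=D_{\pgl{2}{q}}$. It follows that $g\sim h$ if and only if $\gamma_\theta(g)\sim\gamma_\theta(h)$.

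For the group structure, $\gamma_{\theta^i}\gamma_{\theta^j}=\gamma_{\theta^{i+j}}$ is immediate, so $\langle\gamma_\theta\rangle$ is a homomorphic image of $\langle\theta\rangle\cong\mathbb{Z}_k$. For injectivity, suppose $\gamma_{\theta^i}$ is trivial with $0<i<k$. Then $\theta^i$ would centralize $\pgl{2}{q}$. But $\theta^i$ moves the translation $z\mapsto z+\alpha$ to $z\mapsto z+\alpha^{p^i}$, and this differs from the original whenever $\alpha\notin\mathbb{F}_{p^i}$; such an $\alpha$ exists since $i<k$. So the kernel is trivial and $\langle\gamma_\theta\rangle\cong\Aut(\mathbb{F}_q)$.

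The only real subtlety is that the identification of the abstract action $\theta\cdot h$ with conjugation by the permutation $z\mapsto z^p$ must be made carefully, including the point $\infty$. This is what justifies the fixed-point correspondence. Everything else is routine.
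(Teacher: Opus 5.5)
Your proof is correct and follows the paper's argument. Both show that $\gamma_\theta$ preserves the derangement set via ``$h$ fixes $\omega$ if and only if $\theta h\theta^{-1}$ fixes $\theta(\omega)$,'' and then use $\gamma_\theta(h)^{-1}\gamma_\theta(g)=\gamma_\theta(h^{-1}g)$. Your explicit injectivity check via translations fills in what the paper simply calls clear. One wording fix there: $\mathbb{F}_{p^i}$ need not be a subfield of $\mathbb{F}_q$, so say instead that $x^{p^i}-x$ has at most $p^i<q$ roots in $\mathbb{F}_q$, or equivalently that the fixed field of $\theta^i$ is the proper subfield $\mathbb{F}_{p^{\gcd(i,k)}}$.
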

	\begin{proof}
		It is clear that $\Aut(\mathbb{F}_q) = \langle \theta\rangle$ is isomorphic to $\langle \gamma_{\theta} \rangle$.
		We note that $g\in \pgl{2}{q}$ fixes $u\in \pg{1}{q}$ if and only if $\theta\cdot g$ fixes $\theta(u)$. Hence, if $g\in \pgl{2}{q}$ is a derangement on $\pg{1}{q}$, then so is $\theta\cdot g$.
		For any $x,y\in \pgl{2}{q}$, we have 
		\begin{align*}
			\gamma_\theta(x) \sim \gamma_\theta(y) &\Leftrightarrow \gamma_\theta(x)^{-1} \gamma_\theta(y) \mbox{ has no fixed points in $\pg{1}{q}$}\\
			&\Leftrightarrow \gamma_\theta(x^{-1 }y) = \theta (x^{-1} y) \theta^{-1} = \theta \cdot (x^{-1}y) \mbox{ has no fixed points in $\pg{1}{q}$}\\
			&\Leftrightarrow x^{-1} y \mbox{ has no fixed points in $\pg{1}{q}$}\\
			&\Leftrightarrow x \sim y.
		\end{align*}
		Hence, $\gamma_\theta \in \Aut(\Gamma_{\pgl{2}{q}})$.
	\end{proof}

	We can define an action of $\Aut(\mathbb{F}_q)$ as automorphism on $L_{\pgl{2}{q}}\times R_{\pgl{2}{q}}$ as shown in the next lemma. 
	
	\begin{lem}
		We have 
		\begin{align*}
			\langle L_{\pgl{2}{q}}\times R_{\pgl{2}{q}},\gamma_\theta\rangle = \left(L_{\pgl{2}{q}}\times R_{\pgl{2}{q}}\right)\rtimes \langle \gamma_\theta\rangle.
		\end{align*}
		\label{lem:subgroup}
	\end{lem}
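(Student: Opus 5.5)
To prove Lemma~\ref{lem:subgroup}, we show three things inside $\sym{\pgl{2}{q}}$: that $\gamma_\theta$ normalizes $N := L_{\pgl{2}{q}}\times R_{\pgl{2}{q}}$, that $N\cap\langle\gamma_\theta\rangle$ is trivial, and then we conclude by the standard recognition of an internal semidirect product.

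\emph{Normalization.} Here $\gamma_\theta$ is the map $x\mapsto \theta x\theta^{-1} = \theta\cdot x$ computed in $\pgammal{2}{q}$, and it restricts to a group automorphism of $\pgl{2}{q}$. For $g,x\in \pgl{2}{q}$ we compute
\begin{align*}
\gamma_\theta\lambda_g\gamma_\theta^{-1}(x) = \theta\bigl(g\,\theta^{-1}x\theta\bigr)\theta^{-1} = (\theta\cdot g)\,x = \lambda_{\theta\cdot g}(x),
\end{align*}
so $\gamma_\theta\lambda_g\gamma_\theta^{-1} = \lambda_{\theta\cdot g}$. In the same way $\gamma_\theta\rho_g\gamma_\theta^{-1} = \rho_{\theta\cdot g}$. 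Since $\theta\cdot g\in \pgl{2}{q}$, both $L_{\pgl{2}{q}}$ and $R_{\pgl{2}{q}}$ are normalized, hence so is $N$. Therefore $\langle N,\gamma_\theta\rangle = N\langle\gamma_\theta\rangle$.

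\emph{Trivial intersection.} Suppose $\gamma_\theta^i = \lambda_a\rho_b$ for some $a,b\in\pgl{2}{q}$. Evaluating at the identity gives $1 = \gamma_\theta^i(1) = ab^{-1}$, so $a=b$. Then $\gamma_\theta^i = \gamma_a$ is inner conjugation by $a$, which means $\theta^i\cdot x = axa^{-1}$ for all $x\in\pgl{2}{q}$. We must show this forces $\theta^i = 1$ in $\Aut(\mathbb{F}_q)$, i.e.\ $k\mid i$, where we view $\theta^i$ and $a$ in $\pgammal{2}{q}\leqslant \sym{\pg{1}{q}}$. The element $a^{-1}\theta^i$ of $\pgammal{2}{q}$ centralizes $\pgl{2}{q}$ inside $\sym{\pg{1}{q}}$. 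Because $\pgl{2}{q}$ is transitive with point stabilizers fixing exactly one point, its centralizer in the symmetric group is semiregular. More directly, a permutation commuting with a $2$-transitive group must preserve each point stabilizer $\pgl{2}{q}_u$. Since $\pgl{2}{q}_u$ fixes only $u$ (for $q\geq 3$; the case $q=2$ has $k=1$ and is trivial), that permutation fixes every $u$ and so is the identity. Hence $\theta^i = a$ as permutations of $\pg{1}{q}$. But $\theta^i$ fixes $0,1,\infty$, so by sharp $3$-transitivity $a=1$, and $\theta^i$ is the identity map on $\mathbb{F}_q$. This gives $\gamma_\theta^i = 1$, so $N\cap\langle\gamma_\theta\rangle=1$. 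The same argument shows $\langle\gamma_\theta\rangle$ has order $k$, which is consistent with the preceding lemma.

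The only step needing real care is the intersection argument: identifying when conjugation by a field automorphism is inner. The centralizer argument via point stabilizers handles it cleanly, and one could alternatively just cite that the outer automorphism group of $\pgl{2}{q}$ contains $\Aut(\mathbb{F}_q)$ faithfully. With normality and trivial intersection established, the internal semidirect product decomposition follows immediately.
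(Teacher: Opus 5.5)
Your proof is correct and follows the same route as the paper. Normality comes from the same formula $\gamma_\theta\lambda_g\rho_{g'}\gamma_\theta^{-1}=\lambda_{\theta\cdot g}\rho_{\theta\cdot g'}$, and evaluating at the identity reduces a common element to $\gamma_{\theta^i}=\gamma_a$. Your trivial-centralizer argument also supplies the step the paper skips when it passes from $\gamma_{\theta^i}=\gamma_g$ directly to $\theta^i=g\in\pgl{2}{q}\cap\Aut(\mathbb{F}_q)$: a permutation $c$ centralizing $\pgl{2}{q}$ satisfies $\pgl{2}{q}_{c(u)}=\pgl{2}{q}_u$, so it fixes every point.
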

	\begin{proof}
		For any $g,g^\prime \in \pgl{2}{q}$, we have 
		\begin{align*}
			\gamma_\theta (\lambda_g \rho_{g^\prime}) \gamma_\theta^{-1} = (\gamma_\theta \lambda_g \gamma_\theta^{-1}) (\gamma_\theta \rho_{g^\prime} \gamma_\theta^{-1}) = \lambda_{\theta\cdot g} \rho_{\theta \cdot g^\prime}.
		\end{align*}
		Therefore, $L_{\pgl{2}{q}}\times R_{\pgl{2}{q}} \trianglelefteqslant \langle L_{\pgl{2}{q}}\times R_{\pgl{2}{q}},\gamma_\theta \rangle$. If $\phi \in \langle \gamma_\theta\rangle \cap (L_{\pgl{2}{q}}\times R_{\pgl{2}{q}})$ is non-trivial, then there exists $\gamma_\theta^i = \lambda_g \rho_{g^\prime} = \phi$ for some $0\leqslant i \leqslant k-1$ and $g,g^\prime \in \pgl{2}{q}$. Evaluating $\gamma_\theta^i = \lambda_g \rho_{g^\prime}$ on the identity of $\pgl{2}{q}$, we have $g^\prime = g$, and consequently, $\lambda_g\rho_{g^\prime} = \lambda_g\rho_g = \gamma_g$. We deduce that $\gamma_\theta^i = \gamma_{\theta^i} = \gamma_g$. As $\pgl{2}{q} \cap \Aut(\mathbb{F}_q)$ is trivial, we deduce that $i = 0$ and $g = 1$, and thus $\phi$ is the identity element.
		Consequently, $\langle L_{\pgl{2}{q}}\times R_{\pgl{2}{q}},\gamma_\theta \rangle = (L_{\pgl{2}{q}}\times R_{\pgl{2}{q}}) \rtimes \langle \gamma_\theta \rangle$.
	\end{proof}

	Recall that $\psi: \pgl{2}{q} \to \pgl{2}{q}$ is the map such that $\psi(x) = x^{-1}$ for any $x\in \pgl{2}{q}$. Since the centre of $\pgl{2}{q}$ is trivial, it follows from the result in \cite[\S~3.1]{Herman2025} that $\langle L_{\pgl{2}{q}}\times R_{\pgl{2}{q}},\psi \rangle = \left(L_{\pgl{2}{q}}\times R_{\pgl{2}{q}}\right) \rtimes \langle \psi\rangle$. Moreover, for $x\in \pgl{2}{q}$, we have
	\begin{align*}
		\theta \psi(x) = \theta(x^{-1}) = \left(x^{-1}\right)^p = \left(x^p\right)^{-1} = \psi (x^p) = \psi \theta(x).
	\end{align*} 
	In other words, $\psi$ and $\theta$ commute. We deduce the following lemma whose proof is omitted.
	\begin{lem}
		For any $q$, we have 
		\begin{align*}
			\left(L_{\pgl{2}{q}}\times R_{\pgl{2}{q}}\right) \rtimes  \left(\langle \psi \rangle \times \langle \gamma_\theta \rangle\right) \leqslant \Aut(\Gamma_{\pgl{2}{q}}).
		\end{align*}
		\label{lem:part1}
	\end{lem}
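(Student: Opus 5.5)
The plan is to assemble the subgroup from the pieces already established in this subsection and then verify the semidirect product structure. We need three things: every generator is an automorphism; $L\times R$ (abbreviating $L=L_{\pgl{2}{q}}$, $R=R_{\pgl{2}{q}}$) is normalized by $\psi$ and $\gamma_\theta$; and $\langle\psi\rangle\times\langle\gamma_\theta\rangle$ meets $L\times R$ trivially and is itself a direct product.

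First, membership in $\Aut(\Gamma_{\pgl{2}{q}})$. We already have $L\times R$ (Sabidussi, and the fact that $D_{\pgl{2}{q}}$ is a union of conjugacy classes) and $\gamma_\theta$ (the first lemma). For $\psi$, note that $\psi(x)^{-1}\psi(y)=xy^{-1}=x(y^{-1}x)y^{-1}$ is conjugate to $y^{-1}x=(x^{-1}y)^{-1}$. Since the inverse of a derangement is a derangement and $D_{\pgl{2}{q}}$ is closed under conjugation, $x\sim y$ if and only if $\psi(x)\sim\psi(y)$.

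Next, normality. Lemma~\ref{lem:subgroup} gives that $\gamma_\theta$ normalizes $L\times R$. For $\psi$, a direct check gives $\psi\lambda_g\psi^{-1}(x)=(gx^{-1})^{-1}=xg^{-1}=\rho_g(x)$, so $\psi\lambda_g\psi=\rho_g$ and symmetrically $\psi\rho_g\psi=\lambda_g$. Hence $\psi$ swaps $L$ and $R$ and normalizes $L\times R$. The computation displayed before the lemma shows that $\psi$ and $\gamma_\theta$ commute. Since $\psi^2=1$ and $\psi\notin\langle\gamma_\theta\rangle$ (the $\gamma_{\theta^i}$ are group automorphisms, while $\psi$ is not because $\pgl{2}{q}$ is nonabelian), we get $\langle\psi,\gamma_\theta\rangle=\langle\psi\rangle\times\langle\gamma_\theta\rangle$.

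Finally, the trivial intersection, which is the only step needing care. Suppose $\psi^{\epsilon}\gamma_\theta^i=\lambda_g\rho_{g'}$. Evaluating at $1$ gives $g'=g$, so the right side is $\gamma_g$, a group automorphism. If $\epsilon=1$, then $\psi=\gamma_g\gamma_\theta^{-i}$ would be an automorphism, i.e.\ inversion would be a homomorphism, forcing $\pgl{2}{q}$ to be abelian, which is false. So $\epsilon=0$, and the argument of Lemma~\ref{lem:subgroup} gives $i=0$ and $g=1$. Together these show that the generated group is $(L\times R)\rtimes(\langle\psi\rangle\times\langle\gamma_\theta\rangle)$ and that it lies in $\Aut(\Gamma_{\pgl{2}{q}})$.
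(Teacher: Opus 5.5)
Your proposal is correct and is essentially the argument the paper leaves implicit when it says the lemma is deduced from the preceding facts: Sabidussi together with the conjugacy-closed connection set for $L_{\pgl{2}{q}}\times R_{\pgl{2}{q}}$, the first lemma for $\gamma_\theta$, Lemma~\ref{lem:subgroup}, the semidirect product with $\psi$ quoted from \cite{Herman2025}, and the commutation of $\psi$ and $\gamma_\theta$; you additionally verify directly what the paper only cites or omits, namely $\psi\in\Aut(\Gamma_{\pgl{2}{q}})$, $\psi\lambda_g\psi=\rho_g$, and the trivial intersection for the mixed elements $\psi\gamma_\theta^i$ (via the fact that inversion is not an automorphism of a nonabelian group). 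Two small corrections: the conjugation identity should read $xy^{-1}=x(y^{-1}x)x^{-1}$, since as written $x(y^{-1}x)y^{-1}=(xy^{-1})^2$; and the commutation $\gamma_\theta\psi=\psi\gamma_\theta$ is better justified by your own remark that $\gamma_\theta$ is a group automorphism, so $\gamma_\theta(x^{-1})=\gamma_\theta(x)^{-1}$, than by the paper's displayed computation, which conflates $\theta\cdot x$ with $x^p$.
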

	
	For the remainder of the section, we show that equality in fact holds in Lemma~\ref{lem:part1}.

	\subsection{An induced action on $\pg{1}{q}\times \pg{1}{q}$}\label{subsect2} In \cite{Meagher2011}, it was shown that the permutation group $\pgl{2}{q}$ with its action on $\pg{1}{q}$ admits the strict-EKR property. For any $u,v\in \pg{1}{q}$, we define 
	\begin{align}
		S_{u,v}:= \left\{ g\in \pgl{2}{q}:g(u) = v \right\}.
	\end{align}
	We note that $S_{u,u}$ is the stabilizer of $u\in \pg{1}{q}$ in $\pgl{2}{q}$, and for any $v\in \pg{1}{q}$, the set $S_{u,v}$ is a coset of $S_{u,u}$.

	For any triples $(u_1,u_2,u_3)$ and $(v_1,v_2,v_3)$ of $\pg{1}{q}$ with distinct entries, we have
	\begin{align*}
		|S_{u_1,v_1}\cap S_{u_2,v_2}\cap S_{u_3,v_3}| = 1
	\end{align*}
	due to the fact that $\pgl{2}{q}$ is sharply $3$-transitive. As discussed before, $\pgl{2}{q}$ acts intransitively on the set of quadruple of elements of $\pg{1}{q}$ with distinct entries, for $q\geqslant 4$, and two quadruples $(u_1,u_2,u_3,u_4)$ and $(v_1,v_2,v_3,v_4)$ of elements of $\pg{1}{q}$ with distinct entries belong to the same orbit if and only if they have the same cross-ratio. Therefore, we deduce that 
	\begin{align*}
		|S_{u_1,v_1}\cap S_{u_2,v_2}\cap S_{u_3,v_3}\cap S_{u_4,v_4}| = 1 \mbox{ if and only if }(u_1,u_2;u_3,u_4) = (v_1,v_2;v_3,v_4).
	\end{align*}

	Let $\Omega = \left\{ S_{u,v}: u,v\in \pg{1}{q} \right\}$. As $\Omega$ is the set of all cocliques of maximum size in $\Gamma_{\pgl{2}{q}}$, the automorphism group $\Aut(\Gamma_{\pgl{2}{q}})$ acts on $\Omega$. 
	\begin{lem}
		The action of $\Aut(\Gamma_{\pgl{2}{q}})$ on $\Omega$ is faithful.\label{lem:faithful1}
	\end{lem}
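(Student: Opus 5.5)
The plan is to show that an automorphism $\sigma\in\Aut(\Gamma_{\pgl{2}{q}})$ fixing every element of $\Omega$ must fix every vertex of $\Gamma_{\pgl{2}{q}}$. First, the action is well defined: $\sigma$ maps cocliques to cocliques of the same size. By the strict-EKR property of Meagher and Spiga, the maximum cocliques are exactly the sets $S_{u,v}$, so $\sigma$ permutes $\Omega$. Faithfulness then means that the kernel of $\Aut(\Gamma_{\pgl{2}{q}})\to\sym{\Omega}$ is trivial.

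Suppose $\sigma(S_{u,v})=S_{u,v}$ for all $u,v\in\pg{1}{q}$, and take any $g\in\pgl{2}{q}$. The key observation is that $g$ can be recovered from the family of maximum cocliques containing it. Fix three distinct points $u_1,u_2,u_3$ and put $v_i=g(u_i)$. Then $g\in S_{u_1,v_1}\cap S_{u_2,v_2}\cap S_{u_3,v_3}$, and the $v_i$ are distinct because $g$ is a bijection. By sharp $3$-transitivity this intersection has size exactly one, so it equals $\{g\}$. Since $\sigma$ is a bijection on vertices,
\begin{align*}
\{\sigma(g)\}=\sigma\left(\bigcap_{i=1}^{3}S_{u_i,v_i}\right)=\bigcap_{i=1}^{3}\sigma(S_{u_i,v_i})=\bigcap_{i=1}^{3}S_{u_i,v_i}=\{g\}.
\end{align*}
Hence $\sigma(g)=g$ for every $g$, so $\sigma$ is the identity, which proves faithfulness.

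The only step needing care is the identification of $\Omega$ with the full set of maximum cocliques. This is exactly the strict-EKR result of \cite{Meagher2011}, which the paper assumes. We also need the sets $S_{u,v}$ to be pairwise distinct, so that the action on $\Omega$ is meaningful; this holds for $q\geq 2$ because the stabilizer cosets are distinct for distinct pairs $(u,v)$. Beyond that, the argument only uses that images of intersections are intersections of images, so no real obstacle is expected.
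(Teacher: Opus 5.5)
Your proof is correct and is essentially the paper's argument: each vertex $g$ is recovered as an intersection of members of $\Omega$ containing it, so an automorphism fixing every $S_{u,v}$ must fix $g$. The only difference is cosmetic. The paper intersects $S_{u,g(u)}$ over all $u\in\pg{1}{q}$ and uses only an inclusion, whereas you take three points, use sharp $3$-transitivity, and get equality because $\sigma$ is a bijection.
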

	\begin{proof}
		Assume that $\phi \in \Aut(\Gamma_{\pgl{2}{q}})$ such that $\phi(S_{u,v}) = S_{u,v}$ for all $u,v\in \pg{1}{q}$. Then, given $g\in \pgl{2}{q}$ we have 
		\begin{align*}
			\phi(\{g \}) = \phi\left(\bigcap_{u\in \pg{1}{q}} S_{u,g(u)}\right) \subseteq \bigcap_{u\in \pg{1}{q}} \phi(S_{u,g(u)}) = \bigcap_{u\in \pg{1}{q}} S_{u,g(u)} = \{g\}.
		\end{align*}
		Hence, $\phi(g) = g$ for all $g\in \pgl{2}{q}$. Since $\Aut(\Gamma_{\pgl{2}{q}})$ acts faithfully on the vertices of $\Gamma_{\pgl{2}{q}}$, we must have that $\phi$ is the identity map.
	\end{proof}
	Thus, we have the embedding $\Aut(\Gamma_{\pgl{2}{q}}) \hookrightarrow \sym{\Omega} \cong \sym{\pg{1}{q} \times \pg{1}{q}}$.
	Next, we examine the intersection of elements of $\Omega$. If $\phi \in \Aut(\Gamma_{\pgl{2}{q}})$, then 
	\begin{align}
		|\phi(S_{u,v})\cap \phi (S_{u^\prime,v^\prime})| = |S_{u,v}\cap S_{u^\prime,v^\prime}| \label{eq:constant-intersection-size}
	\end{align}
	for any $u,v,u^\prime,v^\prime \in \pg{1}{q}$. Note that the latter holds more generally for permutations of $\pgl{2}{q}$ that also permute $\Omega$. 
	
	As previously discussed, the left-regular representation $L_{\pgl{2}{q}} = \left\{ \lambda_g: g\in \pgl{2}{q} \right\}$ of $\pgl{2}{q}$ is a subgroup of $\Aut(\Gamma_{\pgl{2}{q}}).$ Its action on $\Omega$ is as follows. For $g\in \pgl{2}{q}$ and $u,v\in \pg{1}{q}$, we have 
	$
		\lambda_g(S_{u,v}) =  gS_{u,v} = S_{u,g(v)}.
	$
	Similarly, for $g\in \pgl{2}{q}$, we have $\rho_g(S_{u,v}) = S_{u,v}g^{-1} = S_{g(u),v}$.
	
	We determine the intersection sizes of elements of $\Omega$ in the next lemma.
	\begin{lem}
		If $u,u^\prime,v,v^\prime \in \pg{1}{q}$, then
		\begin{align*}
			|S_{u,v}\cap S_{u^\prime,v^\prime}|
			=
			\begin{cases}
				q(q-1)& \mbox{ if $u = u^\prime$ and $v = v^\prime$}\\
				0& \mbox{ if $u = u^\prime$ and $v\neq v^\prime$, or $u\neq u^\prime$ and $v = v^\prime$} \\ 
				q-1 & \mbox{ if $(u,v)\neq (u^\prime,v^\prime)$}.
			\end{cases}
		\end{align*}
		\label{lem:intersections}
	\end{lem}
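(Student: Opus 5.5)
The three cases are exactly the possible configurations of the two pairs $(u,v)$ and $(u^\prime,v^\prime)$. Note that the third case must be read as $u\neq u^\prime$ and $v\neq v^\prime$; the case $(u,v)=(u^\prime,v^\prime)$ is the first one. For each case we count the elements $g\in\pgl{2}{q}$ satisfying the prescribed conditions.

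\emph{Case $u=u^\prime$, $v=v^\prime$.} Here $S_{u,v}\cap S_{u^\prime,v^\prime}=S_{u,v}$. This set is a coset of the stabilizer $S_{u,u}$, which is a Borel subgroup of order $q(q-1)$, so $|S_{u,v}|=q(q-1)$. Equivalently, by transitivity, $|S_{u,v}|=|\pgl{2}{q}|/(q+1)=q(q-1)$, using $|\pgl{2}{q}|=(q+1)q(q-1)$.

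\emph{Case of exactly one coincidence.} Suppose $u=u^\prime$ but $v\neq v^\prime$. A permutation cannot send $u$ to two different points, so the intersection is empty. If instead $v=v^\prime$ but $u\neq u^\prime$, then any $g$ in the intersection would satisfy $g(u)=g(u^\prime)$ with $u\neq u^\prime$. This contradicts injectivity, so the intersection is again empty.

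\emph{Case $u\neq u^\prime$, $v\neq v^\prime$.} We must count the $g$ with $g(u)=v$ and $g(u^\prime)=v^\prime$. Pick any third point $w\notin\{u,u^\prime\}$; there are $q-1$ choices of images $z\notin\{v,v^\prime\}$ for it. By sharp $3$-transitivity, for each such $z$ there is exactly one $g$ with $(g(u),g(u^\prime),g(w))=(v,v^\prime,z)$. Distinct $z$ give distinct $g$, and every $g$ in the intersection arises this way with $z=g(w)\notin\{v,v^\prime\}$. Hence the intersection has exactly $q-1$ elements.

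Alternatively, 2-transitivity shows the intersection is a coset of the two-point stabilizer of $u,u^\prime$. That stabilizer has order $q(q-1)(q+1)/((q+1)q)=q-1$, giving the same count.

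There is no real obstacle. The only subtle point is interpreting the third case correctly, and making sure the count uses sharp $3$-transitivity, or equivalently the orbit–stabilizer count of the two-point stabilizer.
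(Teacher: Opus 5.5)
Your proof is correct. You are also right that the third case must be read as $u\neq u^\prime$ and $v\neq v^\prime$. The paper's own argument relies on this implicitly when it asserts $u^\prime, g(v^\prime)\neq u$, which needs both inequalities.

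Your first two cases match the paper's exactly. For the third case your main argument takes a different route:
\begin{itemize}
\item \emph{Your route.} You fix a third point $w$ and use sharp $3$-transitivity to put the intersection in bijection with the $q-1$ admissible images $z\notin\{v,v^\prime\}$ of $w$.
\item \emph{The paper's route.} It picks $g$ with $g(v)=u$, then $h\in S_{u,u}$ with $h(g(v^\prime))=u^\prime$. Using invariance of intersection sizes under left multiplication, it obtains $|S_{u,v}\cap S_{u^\prime,v^\prime}|=|S_{u,u}\cap S_{u^\prime,u^\prime}|$. It then applies orbit--stabilizer to the transitive action of the point stabilizer $S_{u,u}$ (of order $q(q-1)$) on the remaining $q$ points.
\end{itemize}

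Your alternative remark, that the intersection is a coset of the two-point stabilizer, is essentially the paper's argument stated more directly.

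The two routes trade off as follows. Your bijective count is shorter and needs no order computation at all for this case, at the cost of invoking the stronger sharp $3$-transitivity. The paper's reduction uses only $2$-transitivity and the order of a point stabilizer. It therefore carries over verbatim to any $2$-transitive group $G$ of degree $n$, giving $|G|/(n(n-1))$.
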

	\begin{proof}
		Clearly, if $u = u^\prime$ and $v = v^\prime$, then $S_{u,v} = S_{u^\prime,v^\prime}$, so the size of their intersection is $q(q-1)$. The intersections $ S_{u,v}\cap S_{u,v^\prime}$ where $v\neq v^\prime$ and $S_{u,v}\cap S_{u^\prime,v}$ where $u\neq u^\prime$ are both empty, otherwise the elements would not be a map in the former, and would not be injective in the latter. Assume next that $(u,v)\neq (u^\prime,v^\prime)$. Let $g\in \pgl{2}{q}$ such that $g(v) = u$. Then, by \eqref{eq:constant-intersection-size} we have
		\begin{align}
			\left|S_{u,v}\cap S_{u^\prime,v^\prime} \right| &= \left|\lambda_g(S_{u,v}) \cap \lambda_g(S_{u^\prime,v^\prime})\right|
			= \left|S_{u,g(v)}\cap S_{u^\prime,g(v^\prime)}\right|
			= \left|S_{u,u} \cap  S_{u^\prime,g(v^\prime)}\right|.\label{eq:coset-intersections}
		\end{align}
		Since $\pgl{2}{q}$ is $2$-transitive on $\pg{1}{q}$, the stabilizer $S_{u,u}$ of $u\in \pg{1}{q}$ has two orbits, namely $\{u\}$ and $\pg{1}{q} \setminus \{u\}$. Note that since $(u,v)\neq (u^\prime,v^\prime)$, we know that $u^\prime, g(v^\prime) \in \pg{1}{q}\setminus\{u\}$. Hence, there exists $h\in S_{u,u}$ such that $h(g(v^\prime)) = u^\prime$. Using this in \eqref{eq:coset-intersections}, we have 
		\begin{align*}
			\left|S_{u,v}\cap S_{u^\prime,v^\prime}\right| = |S_{u,u}\cap S_{u^\prime,g(v^\prime)}| = |hS_{u,u}\cap hS_{u^\prime,g(v^\prime)}| = |S_{u,u} \cap S_{u^\prime,u^\prime}|.
		\end{align*}
		Note that $S_{u,u}\cap S_{u^\prime,u^\prime}$ is the pointwise stabilizer of $u$ and $u^\prime$. As $S_{u,u}$ acts transitively on $\pg{1}{q}\setminus \{u\}$, by the orbit stabilizer lemma we have 
		\begin{align*}
			|S_{u,u} \cap S_{u^\prime,u^\prime}| = \frac{|S_{u,u}|}{|\pg{1}{q}\setminus \{u\}|} = \frac{q(q-1)}{q} = q-1.
		\end{align*}
		Hence, we deduce that $|S_{u,v}\cap S_{u^\prime,v^\prime}| = q-1$. 
	\end{proof}
	
	\subsection{Reduction to the $(q+1)\times (q+1)$--grid}\label{subsect3}
	For any integers $d,q\geq 1$, the Hamming graph $H(d,n)$ is the graph whose vertex set consists of the $d$-tuples with entries from $\{0,1,\ldots,q-1\}$, and where two $d$-tuples are adjacent if they differ in exactly one entry or position. The graph $H(2,n)$ is also known as the $n\times n$ grid graph.
	
	From Lemma~\ref{lem:intersections}, we know that the intersection sizes take exactly three values. From this, we may define a graph that represents the structure of the intersection of elements of $\Omega$. Let $\mathcal{G}_q$ be the graph whose vertex set is $\pg{1}{q} \times \pg{1}{q}$, where two vertices $(u,v)$ and $(u^\prime,v^\prime)$ are adjacent if $|S_{u,v}\cap S_{u^\prime,v^\prime}| = 0$. We illustrate $\mathcal{G}_3$ below. 
	\begin{figure}[H]
		\centering
		\includegraphics[width=7cm]{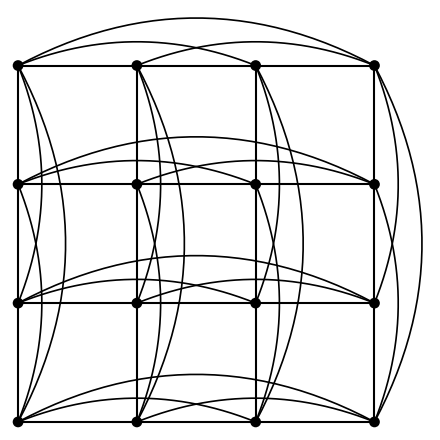}
		\caption{The graph $\mathcal{G}_{3}$.}
	\end{figure}
	
	As the group $\Aut(\Gamma_{\pgl{2}{q}})$ acts (faithfully) on $\Omega$, it also acts (faithfully) on the vertices $\mathcal{G}_q$. To obtain this action, we view any $\phi \in \Aut(\Gamma_{\pgl{2}{q}}) \hookrightarrow \sym{\Omega}$ as a mapping on $\pg{1}{q}\times \pg{1}{q}$ to itself. We write
	\begin{align}
		\phi(u,v) = (u^\prime,v^\prime) \Leftrightarrow \phi(S_{u,v}) = S_{u^\prime,v^\prime}\label{eq:equivalence}
	\end{align}
	for any $u,v,v^\prime,v^\prime \in \pg{1}{q}$ and $\phi \in \Aut(\Gamma_{\pgl{2}{q}})$. In fact, $\Aut(\Gamma_{\pgl{2}{q}})$ does not only act faithfully on $\pg{1}{q}\times \pg{1}{q}$, it also acts as automorphism of $\mathcal{G}_q$.
	\begin{lem}
		Any $\phi \in \Aut(\Gamma_{\pgl{2}{q}})$ induces an automorphism of $\mathcal{G}_q$. In particular, $\Aut(\Gamma_{\pgl{2}{q}})$ embeds into $\Aut(\mathcal{G}_q)$.\label{lem:induced-automorphism}
	\end{lem}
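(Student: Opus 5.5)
The plan is to combine the faithful action of $\Aut(\Gamma_{\pgl{2}{q}})$ on $\Omega$ (Lemma~\ref{lem:faithful1}) with the invariance of intersection sizes \eqref{eq:constant-intersection-size}. Fix $\phi \in \Aut(\Gamma_{\pgl{2}{q}})$. Since $\Omega$ is exactly the set of maximum cocliques of $\Gamma_{\pgl{2}{q}}$ (strict-EKR property of Meagher and Spiga), $\phi$ permutes $\Omega$. Through the identification \eqref{eq:equivalence} between $S_{u,v}$ and $(u,v)$, it therefore induces a permutation $\bar\phi$ of $\pg{1}{q}\times\pg{1}{q}$. This permutation is well defined because the sets $S_{u,v}$ are pairwise distinct: by Lemma~\ref{lem:intersections}, distinct pairs give intersections of size $0$ or $q-1$, whereas $|S_{u,v}| = q(q-1)$.

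Next I would check that $\bar\phi$ preserves adjacency in $\mathcal{G}_q$. Suppose $(u,v)\sim(u',v')$, that is, $|S_{u,v}\cap S_{u',v'}|=0$. Since $\phi$ is a bijection on the vertex set $\pgl{2}{q}$, we have $\phi(S_{u,v})\cap\phi(S_{u',v'}) = \phi(S_{u,v}\cap S_{u',v'})$, so the image sets also have empty intersection. Writing $\bar\phi(u,v)$ and $\bar\phi(u',v')$ for the corresponding pairs, these pairs are therefore adjacent. The same argument applied to $\phi^{-1}$ gives the converse, so $\bar\phi \in \Aut(\mathcal{G}_q)$. The map $\phi\mapsto\bar\phi$ is a homomorphism, since it is the restriction of the natural action on $\Omega$, and it is injective by Lemma~\ref{lem:faithful1}. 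This yields the embedding into $\Aut(\mathcal{G}_q)$.

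The only delicate point is the first step: that $\phi$ maps each $S_{u,v}$ onto some member of $\Omega$. This rests entirely on the strict-EKR result, because automorphisms map maximum cocliques to maximum cocliques. Everything after that is formal.

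Finally, as a remark for later use, Lemma~\ref{lem:intersections} identifies $\mathcal{G}_q$ with the grid $H(2,q+1)$: two pairs are adjacent exactly when they agree in one coordinate and differ in the other.
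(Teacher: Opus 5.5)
Your proposal is correct and follows the paper's proof: both use the faithful action on $\Omega$ (Lemma~\ref{lem:faithful1}) to get a permutation of $\pg{1}{q}\times\pg{1}{q}$, and both use that $\phi$, being a bijection of $\pgl{2}{q}$, preserves emptiness of intersections of the sets $S_{u,v}$. Your explicit check that $(u,v)\mapsto S_{u,v}$ is injective is a small detail the paper leaves implicit.
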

	\begin{proof}
		By Lemma~\ref{lem:faithful1}, $\phi$ induces a permutation on $\pg{1}{q} \times \pg{1}{q}$. Given two vertices $(u,v)$ and $(u^\prime,v^\prime)$ of $\mathcal{G}_q$, we have 
		\begin{align*}
			(u,v) \sim_{\mathcal{G}_q} (u^\prime,v^\prime) &\Leftrightarrow S_{u,v} \cap S_{u^\prime,v^\prime} = \varnothing\\
			&\Leftrightarrow \phi (S_{u,v}) \cap \phi(S_{u^\prime,v^\prime}) = \varnothing   \qquad (\mbox{ by }\eqref{eq:constant-intersection-size})\\
			&\Leftrightarrow \phi(u,v) \sim_{\mathcal{G}_q} \phi(u^\prime,v^\prime).
		\end{align*}
		In other words, $\phi$ induces an automorphism of $\mathcal{G}_q$. The rest of the proof is immediate.
	\end{proof}
	
	\begin{table}[hb]
		\begin{tabular}{c|c|c}
			$\phi\in \Aut(\Gamma_{\pgl{2}{q}})$& $\overline{\phi}\in \Aut(\mathcal{G}_q)$&Comments\\
			\hline\hline
			$\lambda_g$& $(1,g;1)$&$g\in \pgl{2}{q}$\\
			$\rho_g$& $(g,1;1)$&$g\in \pgl{2}{q}$\\
			$\psi$ & $(1,1;-1)$ & swaps coordinates\\
			$\gamma_\theta$ & $(\theta,\theta;1)$&$\theta \in \Aut(\mathbb{F}_q)$\\
			\hline
		\end{tabular}
		\caption{Images of certain automorphisms of $\Aut(\Gamma_{\pgl{2}{q}}) \hookrightarrow \Aut(\mathcal{G}_q)$.}\label{tab}
	\end{table}
	
	Using Lemma~\ref{lem:intersections}, the graph $\mathcal{G}_q$ is clearly isomorphic the Hamming graph $H(2,q+1)  = K_{q+1} \square K_{q+1} $.
	Consequently, we have $\Aut(\mathcal{G}_q) = \sym{\pg{1}{q}}\wr C_{2} = (\sym{\pg{1}{q}} \times \sym{\pg{1}{q}})\rtimes C_2$, which consists of all triples
			\begin{align*}
				(\sigma,\tau;r)\in \sym{\pg{1}{q}}\times \sym{\pg{1}{q}}\times C_2
			\end{align*}
			such that 
			\begin{align*}
				(\sigma,\tau;r)(i,j) =
				\begin{cases}
					(\sigma(i),\tau(j)) & \mbox{ if }r = 1\\
					(\tau(j),\sigma(i)) & \mbox{ if } r= -1.
				\end{cases}
			\end{align*}

		As $\sym{\pg{1}{q}}$ is centreless, we have
		\begin{align*}
			\langle L_{\sym{\pg{1}{q}}},R_{\sym{\pg{1}{q}}},\psi\rangle = \left(L_{\sym{\pg{1}{q}}} \times R_{\sym{\pg{1}{q}}}\right) \rtimes \langle \psi \rangle.
		\end{align*}
		The group $\left(L_{\sym{\pg{1}{q}}} \times R_{\sym{\pg{1}{q}}}\right) \rtimes \langle \psi \rangle$ acts on $\Omega$ by permuting the indices in $\Omega$.
		Define the map 
		\begin{align*}
			\Phi:\left(L_{\sym{\pg{1}{q}}} \times R_{\sym{\pg{1}{q}}}\right) \rtimes \langle \psi \rangle &\longrightarrow (\sym{\pg{1}{q}} \times \sym{\pg{1}{q}})\rtimes C_2\\
			\lambda_\sigma\rho_{\tau}\psi^i &\longmapsto(\tau,\sigma;(-1)^i).
		\end{align*}
		The map $\Phi$ is clearly a group isomorphism.
		Let $\Phi(\phi) = \overline{\phi}$ is the permutation induced by $\phi\in (L_{\sym{\pg{1}{q}}} \times R_{\sym{\pg{1}{q}}})\rtimes \langle \psi \rangle$ in $\pg{1}{q}\times \pg{1}{q}$. For any subgroup $M\leqslant (L_{\sym{\pg{1}{q}}} \times R_{\sym{\pg{1}{q}}}) \rtimes \langle \psi \rangle$, we define $\overline{M} = \Phi(M)$. We give the image under $\Phi$ of certain elements of $\Aut(\Gamma_{\pgl{2}{q}})$ in its embedding into $\Aut(\mathcal{G}_q)$ in Table~\ref{tab}; the proof is straightforward, so we omit it.

	We note that any $(\sigma,\tau;(-1)^i)\in \sym{\pg{1}{q}}\wr C_2$ can be expressed as the product
	\begin{align*}
		(\sigma,\tau;(-1)^i) = (\sigma,\tau;1)(1,1;(-1)^i) = (\sigma,\tau;1)\overline{\psi}^i.
	\end{align*}
	Since $\psi \in \Aut(\Gamma_{\pgl{2}{q}})$, it is an easy exercise to show that there exists a subgroup $K\leqslant \sym{\pg{1}{q}} \times \sym{\pg{1}{q}} \leq \sym{\pg{1}{q}}\wr C_2$ such that 
	\begin{align}
		\Aut(\Gamma_{\pgl{2}{q}})\cong \overline{\Aut(\Gamma_{\pgl{2}{q}})} = K\langle \overline{\psi}\rangle =  K\rtimes \langle \overline{\psi}\rangle.\label{eq:K}
	\end{align}
	In particular, 
	\begin{align}
		\Aut(\Gamma_{\pgl{2}{q}}) = \Phi^{-1}(K)\rtimes \langle \psi\rangle.\label{eq:Aut-form}
	\end{align}
		
	\subsection{Proof of Theorem~\ref{thm:main}}\label{subsect5}
	In this section, we combine the results from the previous sections to determine the subgroup $\Phi^{-1}(K)$, or equivalently, $K\leqslant \sym{\pg{1}{q}} \times \sym{\pg{1}{q}}.$ 	
	
	For any $g\in \pgl{2}{q}$, recall from Table~\ref{tab} that $\overline{\lambda_g} = (1,g;1)$ and $\overline{\rho_g} = (g,1;1)$. Moreover, we have $\overline{\gamma_\theta} = (\theta,\theta;1)$.
	It is therefore clear from Lemma~\ref{lem:subgroup} that
	\begin{align}
		(\overline{L}_{\pgl{2}{q}}\times \overline{R}_{\pgl{2}{q}})\rtimes \langle \overline{\gamma_\theta}\rangle\leqslant K,\label{eq:subgroup}
	\end{align} 
	where $K$ is the subgroup defined in \eqref{eq:K}.
	It is worth noting that a typical element of $(\overline{L}_{\pgl{2}{q}}\times \overline{R}_{\pgl{2}{q}}) \rtimes \langle \overline{\gamma_\theta} \rangle$ is of the form $(g\theta^i,g^\prime \theta^i;0)$, where $g,g^\prime \in \pgl{2}{q}$ and $0\leqslant i,i^\prime \leqslant k-1$.

	Now, let $\phi \in \Aut(\Gamma_{\pgl{2}{q}})$ such that $\overline{\phi} \in K$. As $K\leq \sym{\pg{1}{q}}\times \sym{\pg{1}{q}}\leqslant \sym{\pg{1}{q}}\wr C_2$, there exist two permutations $\sigma,\tau \in \sym{\pg{1}{q}}$ such that 
	\begin{align*}
		\overline{\phi} = \left(\sigma,\tau;1\right).
	\end{align*}
	\begin{claim}
		The permutations $\sigma $ and $\tau$ lie in $\pgammal{2}{q}$.\label{claim1}
	\end{claim}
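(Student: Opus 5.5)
The plan is to show that $\phi$ acts on vertices as $g \mapsto \tau g\sigma^{-1}$, and then to use that this map must send $\pgl{2}{q}$ into itself, so that $\sigma$ normalizes $\pgl{2}{q}$ and Theorem~\ref{thm:FTPG} applies. By \eqref{eq:equivalence}, $\overline{\phi}=(\sigma,\tau;1)$ means $\phi(S_{u,v}) = S_{\sigma(u),\tau(v)}$ for all $u,v\in\pg{1}{q}$. This convention matches Table~\ref{tab}: $\lambda_g(S_{u,v})=S_{u,g(v)}$ corresponds to $(1,g;1)$.

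First I would compute $\phi$ on vertices, as in the proof of Lemma~\ref{lem:faithful1}. Fix $g\in\pgl{2}{q}$. Then $g\in S_{u,g(u)}$ for every $u\in\pg{1}{q}$, so $\phi(g)\in \phi(S_{u,g(u)}) = S_{\sigma(u),\tau(g(u))}$ for every $u$. This says $\phi(g)(\sigma(u)) = \tau g(u)$ for all $u$. Since $\sigma$ is a bijection of $\pg{1}{q}$, we get, as permutations of $\pg{1}{q}$,
\begin{align*}
\phi(g) = \tau g \sigma^{-1}.
\end{align*}
As $\phi(g)$ is a vertex of $\Gamma_{\pgl{2}{q}}$, it follows that $\tau g\sigma^{-1}\in\pgl{2}{q}$ for every $g\in\pgl{2}{q}$.

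Taking $g=1$ gives $h:=\tau\sigma^{-1}\in\pgl{2}{q}$, so $\tau = h\sigma$. Substituting, $\tau g \sigma^{-1} = h\,\sigma g\sigma^{-1}\in \pgl{2}{q}$, hence $\sigma g\sigma^{-1}\in\pgl{2}{q}$ for all $g\in\pgl{2}{q}$. Therefore $\sigma\in \operatorname{N}_{\sym{\pg{1}{q}}}(\pgl{2}{q})$, which equals $\pgammal{2}{q}$ by Theorem~\ref{thm:FTPG}. Finally, $\tau = h\sigma\in\pgl{2}{q}\,\pgammal{2}{q} = \pgammal{2}{q}$.

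I do not expect a genuine obstacle here. The point needing care is the bookkeeping of which coordinate $\sigma$ and $\tau$ act on, which is fixed by the table. The same conclusion holds with the roles of $\sigma$ and $\tau$ interchanged if the convention were reversed, since the argument is symmetric after replacing $h$ by $\sigma^{-1}\tau$ or $\sigma\tau^{-1}$.
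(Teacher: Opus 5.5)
Your proof is correct, but it follows a different route from the paper's.

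The paper never computes $\phi$ on vertices. It chains $\phi^{-1}$, $\rho_g$ and $\phi$ through the fourfold intersection $\bigl|\bigcap_{i=1}^4 S_{u_i,u_i}\bigr|=1$. This shows that some element of $\pgl{2}{q}$ sends $\sigma g\sigma^{-1}(u_i)$ to $u_i$ for $i=1,\dots,4$, so $\sigma g\sigma^{-1}$ preserves every cross-ratio. Theorem~\ref{thm:cross-ratio} then gives $\sigma g\sigma^{-1}\in\pgl{2}{q}$, and the paper reruns the argument with $\lambda_g$ to handle $\tau$ separately.

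You instead push the idea of Lemma~\ref{lem:faithful1} one step further to get the explicit vertex formula $\phi(g)=\tau g\sigma^{-1}$, correctly checked against the conventions of Table~\ref{tab}. You then use only the fact that $\phi$ maps $\pgl{2}{q}$ into itself. This has three advantages:
\begin{enumerate}
\item It bypasses both the cross-ratio characterization (Theorem~\ref{thm:cross-ratio}) and the auxiliary automorphisms $\lambda_g,\rho_g$; only Theorem~\ref{thm:FTPG} is needed at the end.
\item It treats $\sigma$ and $\tau$ in one stroke.
\item It proves more than the claim. Your case $g=1$ gives $\tau\sigma^{-1}\in\pgl{2}{q}$, so $\sigma$ and $\tau$ lie in the same coset of $\pgl{2}{q}$ in $\pgammal{2}{q}$, i.e.\ they carry the same power $\theta^i$. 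That is exactly the mechanism behind the ``only if'' direction of Claim~\ref{claim2}, which becomes an immediate corollary of your argument.
\end{enumerate}
The paper's approach stays within the language of intersection sizes of the maximum cocliques $S_{u,v}$, which suits its EKR-based narrative. It pays for this with a longer computation and an extra geometric ingredient.
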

	\begin{proof}[Proof Claim~\ref{claim1}]
		For any $\varphi \in \Aut(\Gamma_{\pgl{2}{q}})$ and for any pair of quadruples $(u_1,u_2,u_3,u_4)$ and $(v_1,v_2,v_3,v_4)$ with distinct entries from $\pg{1}{q}$, we have 
		\begin{align*}
			|S_{u_1,v_1} \cap S_{u_2,v_2} \cap S_{u_3,v_3}\cap S_{u_4,v_4}| &= |\varphi(S_{u_1,v_1}) \cap \varphi(S_{u_2,v_2}) \cap \varphi(S_{u_3,v_3}) \cap \varphi(S_{u_4,v_4})|.
		\end{align*}
		Let us show that $\sigma g\sigma ^{-1},\tau g\tau^{-1} \in \pgl{2}{q}$, for any $g\in \pgl{2}{q}$. For any four distinct elements $u_1,u_2,u_3$ and $u_4$ of $\pg{1}{q}$, using the fact that ${\phi}$ and ${\rho_g}$ are automorphisms of $\Gamma_{\pgl{2}{q}}$, we have 
		\begin{align*}
			1 &= \left|\bigcap_{i = 1}^4 S_{u_i,u_i}\right| =\left|\bigcap_{i = 1}^4 {\phi^{-1}} (S_{u_i,u_i})\right| =\left|\bigcap_{i = 1}^4  (S_{\overline{\phi^{-1}}(u_i,u_i)})\right|= \left|\bigcap_{i = 1}^4 S_{\sigma^{-1}(u_i),\tau^{-1}(u_i)}\right|\\
			&\hspace*{1cm}= \left|\bigcap_{i = 1}^4 {\rho_g}(S_{\sigma^{-1}(u_i),\tau^{-1}(u_i)})  \right|= \left|\bigcap_{i = 1}^4 S_{g\sigma^{-1}(u_i),\tau^{-1}(u_i)} \right|= \left|\bigcap_{i = 1}^4 {\phi}(S_{g\sigma^{-1}(u_i),\tau^{-1}(u_i)})\right|\\
			&\hspace*{1.5cm}= \left|\bigcap_{i = 1}^4 (S_{\overline{\phi}(g\sigma^{-1}(u_i),\tau^{-1}(u_i))})\right|= \left|\bigcap_{i = 1}^4 S_{\sigma g\sigma^{-1}(u_i),\tau \tau^{-1}(u_i)}\right|= \left|\bigcap_{i = 1}^4 S_{\sigma g\sigma^{-1}(u_i),u_i}\right|.
		\end{align*}
		Hence, there is a unique element of $\pgl{2}{q}$ that maps the quadruples $(u_1,u_2,u_3,u_4)$ to $\left(\sigma g\sigma^{-1}(u_1),\sigma g\sigma^{-1}(u_2),\sigma g\sigma^{-1}(u_3),\sigma g \sigma^{-1} (u_4)\right)$. By Theorem~\ref{thm:cross-ratio}, we must have 
		\begin{align*}
			\left(u_1,u_2;u_3,u_4\right) = \left(\sigma g\sigma^{-1}(u_1), \sigma g \sigma^{-1}(u_2); \sigma g\sigma^{-1}(u_3),\sigma g\sigma^{-1}(u_4)\right),
		\end{align*}
		for any quadruples $(u_1,u_2,u_3,u_4)$ of $\pg{1}{q}$ with distinct entries. In other words, $\sigma g\sigma^{-1} \in \pgl{2}{q}$ for any $g\in \pgl{2}{q}$. Thus, $\sigma \in \operatorname{N}_{\sym{\pg{1}{q}}}(\pgl{2}{q})$. Similarly, we can show that $\tau \in \operatorname{N}_{\sym{\pg{1}{q}}}(\pgl{2}{q})$ by using the automorphism $\lambda_g\in L_{\pgl{2}{q}}$. The result now follows from Theorem~\ref{thm:FTPG}.
	\end{proof}
	
	From Claim~\ref{claim1}, we deduce that $K \leqslant \overline{L}_{\pgammal{2}{q}} \times \overline{R}_{\pgammal{2}{q}}$. Therefore, for $\varphi \in \Aut(\Gamma_{\pgl{2}{q}})$, we have 
	\begin{align*}
		\overline{\varphi} = (g,g^\prime;(-1)^i) = \overline{\lambda_{g^\prime}}\overline{\rho_g}\overline{\psi^i}
	\end{align*} 
	for some $g,g^\prime \in \pgammal{2}{q}$ and $i\in \{0,1\}$. Since $\Phi$ is an isomorphism, we note that $\Phi^{-1}(K) \leq L_{\pgammal{2}{q}}\times R_{\pgammal{2}{q}}$, and therefore we have
	\begin{align}
		\left(L_{\pgl{2}{q}} \times R_{\pgl{2}{q}}\right) \rtimes \langle \gamma_\theta\rangle \leqslant \Phi^{-1}(K) \leqslant \left(L_{\pgammal{2}{q}} \times R_{\pgammal{2}{q}}\right).\label{eq:inclusions}
	\end{align}
	
	Next, we determine the elements of $L_{\pgammal{2}{q}} \times R_{\pgammal{2}{q}}$  that lie in $\Aut(\Gamma_{\pgl{2}{q}})$.

	\begin{claim}
		Let $g,g^\prime \in \pgl{2}{q}$ and $0\leqslant i,j\leqslant k-1$. Then, $\rho_{g^\prime \theta^j}\lambda_{g\theta^i} \in \Aut(\Gamma_{\pgl{2}{q}})$ if and only if $i = j$. In this case, $\rho_{g^\prime \theta^j}\lambda_{g\theta^i} = \rho_g\lambda_{g^\prime}\gamma_{\theta^i} \in (L_{\pgl{2}{q}}\times R_{\pgl{2}{q}}) \rtimes \langle \gamma_\theta\rangle$.\label{claim2}
	\end{claim}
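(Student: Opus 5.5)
The plan is to compute the map $\rho_{g^\prime\theta^j}\lambda_{g\theta^i}$ explicitly as a map on $\pgammal{2}{q}$ and test whether it even sends $\pgl{2}{q}$ to itself. By the definitions of $\lambda$ and $\rho$, for $x\in\pgl{2}{q}$ we have
\begin{align*}
\rho_{g^\prime\theta^j}\lambda_{g\theta^i}(x) = g\theta^i x \theta^{-j}g^{\prime -1} = g\,\theta^{i-j}\,(\theta^{j}x\theta^{-j})\,g^{\prime -1} = g\,\theta^{i-j}\,(\theta^j\cdot x)\,g^{\prime-1},
\end{align*}
using $\theta\cdot h=\theta h\theta^{-1}$ in $\pgammal{2}{q}$. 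Since $g$, $g^\prime$ and $\theta^j\cdot x$ lie in $\pgl{2}{q}$, the image lies in $\pgl{2}{q}$ if and only if $\theta^{i-j}\in\pgl{2}{q}$.

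For the ``only if'' direction, I will show that $\theta^m\notin\pgl{2}{q}$ for $m\not\equiv 0 \pmod k$. As a permutation of $\pg{1}{q}$, $\theta^m$ fixes $0,1,\infty$. By sharp $3$-transitivity, the only element of $\pgl{2}{q}$ fixing three points is the identity. However, $\theta^m$ is nontrivial on $\mathbb{F}_q$ when $0<m<k$, because $\Aut(\mathbb{F}_q)\cong\mathbb{Z}_k$ acts faithfully. This is precisely the fact $\pgl{2}{q}\cap\Aut(\mathbb{F}_q)=1$ already used in Lemma~\ref{lem:subgroup}. Hence if $i\neq j$, taking $x=1$ gives $\rho_{g^\prime\theta^j}\lambda_{g\theta^i}(1)=g\theta^{i-j}g^{\prime-1}\notin\pgl{2}{q}$. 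So the map does not even permute the vertex set of $\Gamma_{\pgl{2}{q}}$, and in particular it is not an automorphism.

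For the ``if'' direction, suppose $i=j$. Then the displayed formula reads $x\mapsto g(\theta^i x\theta^{-i})g^{\prime-1}$. This is exactly the composite of $\gamma_{\theta^i}=\gamma_\theta^i$ followed by left multiplication by $g$ and right multiplication by $g^{\prime-1}$, that is, $\lambda_g\rho_{g^\prime}\gamma_{\theta^i}$. Since $L$ and $R$ commute, this can also be rewritten in the order stated in the claim, after matching the naming of $g,g^\prime$ with the convention $\overline{\lambda_{g^\prime}}\overline{\rho_g}$ used just before \eqref{eq:inclusions}. This element lies in $(L_{\pgl{2}{q}}\times R_{\pgl{2}{q}})\rtimes\langle\gamma_\theta\rangle$, which is contained in $\Aut(\Gamma_{\pgl{2}{q}})$ by Lemma~\ref{lem:subgroup} and Lemma~\ref{lem:part1}.

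The only delicate point is the bookkeeping. Elements of $L_{\pgammal{2}{q}}\times R_{\pgammal{2}{q}}$ are a priori permutations of $\pgammal{2}{q}$, or act on $\Omega$ through the indices. So one must be careful that ``lies in $\Aut(\Gamma_{\pgl{2}{q}})$'' means the restriction preserves $\pgl{2}{q}$ and is induced consistently with the action on $\Omega$. The evaluation at the identity settles this cleanly, so I do not expect a genuine obstacle beyond getting the order of the factors and the labels $g,g^\prime$ consistent with Table~\ref{tab}.
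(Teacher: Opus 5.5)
Your proof is correct and follows the paper's route. Evaluating $x\mapsto g\theta^i x\theta^{-j}g^{\prime-1}$ on $\pgl{2}{q}$ shows that the map preserves $\pgl{2}{q}$ only if $\theta^{i-j}\in\pgl{2}{q}$, hence $i=j$. Your sharp $3$-transitivity argument (a nontrivial $\theta^m$ fixes $0,1,\infty$) supplies the justification the paper leaves implicit in ``the latter can only happen if $i=j$''.

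Your identification of the $i=j$ element as $\lambda_g\rho_{g^\prime}\gamma_{\theta^i}=\rho_{g^\prime}\lambda_g\gamma_{\theta^i}$ is the correct one, and it agrees with the paper's later display. The claim's $\rho_g\lambda_{g^\prime}\gamma_{\theta^i}$ has $g$ and $g^\prime$ swapped, so rather than saying it can be ``rewritten in the order stated'', you should flag it as a typo. The typo does not affect membership in $(L_{\pgl{2}{q}}\times R_{\pgl{2}{q}})\rtimes\langle\gamma_\theta\rangle$.
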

	\begin{proof}
		The converse of the statement follows from Lemma~\ref{lem:part1}. Now, assume that $\rho_{g^\prime \theta^j}\lambda_{g\theta^i} \in \Aut(\Gamma_{\pgl{2}{q}})$. It is immediate that
		\begin{align*}
			\rho_{g^\prime \theta^j}\lambda_{g\theta^i} \in \Aut(\Gamma_{\pgl{2}{q}}) \mbox{ if and only if } \rho_{\theta^j}\lambda_{\theta^i} \in \Aut(\Gamma_{\pgl{2}{q}}).
		\end{align*}
		As $\rho_{\theta^j}\lambda_{\theta^i} \in \Aut(\Gamma_{\pgl{2}{q}})$, it is a permutation of $\pgl{2}{q}$. In particular, $\rho_{\theta^j}\lambda_{\theta^i} (x) \in \pgl{2}{q}$ for any $x \in \pgl{2}{q}$. As $\rho_{\theta^j}\lambda_{\theta^i}(x) =  \theta^ix \theta^{-j}$, for $x\in \pgl{2}{q}$, we have $\theta^i \pgl{2}{q} \theta^{-j} \subseteq \pgl{2}{q}$. The latter can only happen if $i=j$. \qedhere
	\end{proof}
	
	We deduce from Claim~\ref{claim2} that the elements of  $\Phi^{-1}(K)$ must be of the form 
	\begin{align*}
		\lambda_{g\theta^i} \rho_{g^\prime \theta^i} = \lambda_g \rho_{g^\prime} \lambda_{\theta^i}\rho_{\theta^i} = \lambda_g \rho_{g^\prime} \gamma_{\theta^i} \in (L_{\pgl{2}{q}}\times R_{\pgl{2}{q}}) \rtimes \langle \gamma_{\theta} \rangle
	\end{align*}
	for some $g,g^\prime \in \pgl{2}{q}$ and $0\leqslant i\leqslant k-1$. By \eqref{eq:inclusions}, we have  
	\begin{align*}
		\Phi^{-1}(K) = (L_{\pgl{2}{q} \times R_{\pgl{2}{q}}}) \rtimes \langle \gamma_{\theta}\rangle.
	\end{align*}
	Therefore, we conclude that
	\begin{align*}
		\Aut(\Gamma_{\pgl{2}{q}}) = \Phi^{-1}(K)\rtimes \langle \psi\rangle = (L_{\pgl{2}{q}} \times R_{\pgl{2}{q}}) \rtimes \langle \gamma_\theta,\psi\rangle,
	\end{align*}
	thus proving Theorem~\ref{thm:main}.
	
	\bibliography{ref}

@Article{Godsil1983,
  author    = {Godsil, C.D.},
  journal   = {European J.Combin.},
  title     = {The Automorphism Groups of Some Cubic {C}ayley Graphs},
  year      = {1983},
  issn      = {0195-6698},
  month     = Mar,
  number    = {1},
  pages     = {25--32},
  volume    = {{\bf 4}},
  doi       = {10.1016/s0195-6698(83)80005-5},
  publisher = {Elsevier BV},
}

@Article{Imrich1976,
  author    = {Imrich, W. and Watkins, M. E.},
  journal   = {Period. Math. Hungar.},
  title     = {On automorphism groups of {C}ayley graphs},
  year      = {1976},
  issn      = {1588-2829},
  month     = Sept,
  number    = {3-4},
  pages     = {243--258},
  volume    = {{\bf 7}},
  doi       = {10.1007/bf02017943},
  publisher = {Springer Science and Business Media LLC},
}

@Article{Fang2002,
  author    = {Fang, Xin Gui and Praeger, Cheryl E. and Wang, Jie},
  journal   = {J. Lond. Math. Soc.},
  title     = {On the Automorphism Groups of {C}ayley Graphs of Finite Simple Groups},
  year      = {2002},
  issn      = {0024-6107},
  month     = Dec,
  number    = {3},
  pages     = {563--578},
  volume    = {{\bf 66}},
  doi       = {10.1112/s0024610702003666},
  publisher = {Wiley},
}

@Article{Ganesan2013,
  author    = {Ganesan, Ashwin},
  journal   = {Discrete Math.},
  title     = {Automorphism groups of {C}ayley graphs generated by connected transposition sets},
  year      = {2013},
  issn      = {0012-365X},
  month     = Nov,
  number    = {21},
  pages     = {2482--2485},
  volume    = {{\bf 313}},
  doi       = {10.1016/j.disc.2013.07.013},
  publisher = {Elsevier BV},
}

@Article{Feng2006,
  author    = {Feng, Yan-Quan},
  journal   = {J. Combin. Theory Ser. B},
  title     = {Automorphism groups of {C}ayley graphs on symmetric groups with generating transposition sets},
  year      = {2006},
  issn      = {0095-8956},
  month     = Jan,
  number    = {1},
  pages     = {67--72},
  volume    = {{\bf 96}},
  doi       = {10.1016/j.jctb.2005.06.010},
  publisher = {Elsevier BV},
}

@Book{Godsil2004,
  author    = {Godsil, Chris and Royle, Gordon},
  publisher = {Springer},
  title     = {Algebraic graph theory},
  year      = {2004},
  address   = {New York},
  isbn      = {9780387952208},
  number    = {207},
  series    = {Graduate texts in mathematics},
  pagetotal = {439},
  ppn_gvk   = {548571503},
}

@Book{Godsil2015,
  author    = {Godsil, Christopher and Meagher, Karen},
  publisher = {Cambridge University Press},
  title     = {Erdős–Ko–Rado Theorems: Algebraic Approaches},
  year      = {2015},
  isbn      = {9781316414958},
  month     = Nov,
  doi       = {10.1017/cbo9781316414958},
}

@Article{Larose2004,
  author    = {Larose, Benoit and Malvenuto, Claudia},
  journal   = {European J. Combin.},
  title     = {Stable sets of maximal size in {K}neser-type graphs},
  year      = {2004},
  issn      = {0195-6698},
  month     = July,
  number    = {5},
  pages     = {657--673},
  volume    = {{\bf 25}},
  doi       = {10.1016/j.ejc.2003.10.006},
  publisher = {Elsevier BV},
}

@Article{Godsil2009,
  author    = {Godsil, Chris and Meagher, Karen},
  journal   = {European J. Combin.},
  title     = {A new proof of the {E}rdős–{K}o–{R}ado theorem for intersecting families of permutations},
  year      = {2009},
  issn      = {0195-6698},
  month     = Feb,
  number    = {2},
  pages     = {404--414},
  volume    = {{\bf 30}},
  doi       = {10.1016/j.ejc.2008.05.006},
  publisher = {Elsevier BV},
}

@Article{Cameron2003,
  author    = {Cameron, Peter J. and Ku, C.Y.},
  journal   = {European J. Combin.},
  title     = {Intersecting families of permutations},
  year      = {2003},
  issn      = {0195-6698},
  month     = Oct,
  number    = {7},
  pages     = {881--890},
  volume    = {{\bf 24}},
  doi       = {10.1016/s0195-6698(03)00078-7},
  publisher = {Elsevier BV},
}

@Article{Deng2011,
  author    = {Deng, Yun-Ping and Zhang, Xiao-Dong},
  journal   = {Electron. J. Combin.},
  title     = {Automorphism {G}roup of the {D}erangement {G}raph},
  year      = {2011},
  issn      = {1077-8926},
  month     = Oct,
  number    = {1},
  volume    = {{\bf 18}},
  doi       = {10.37236/685},
  publisher = {The Electronic Journal of Combinatorics},
}

@Article{Meagher2011,
  author    = {Meagher, Karen and Spiga, Pablo},
  journal   = {J. Combin. Theory Ser. A},
  title     = {An {E}rdős–{K}o–{R}ado theorem for the derangement graph of ${PGL}(2,q)$ acting on the projective line},
  year      = {2011},
  issn      = {0097-3165},
  month     = Feb,
  number    = {2},
  pages     = {532--544},
  volume    = {{\bf 118}},
  doi       = {10.1016/j.jcta.2010.11.003},
  publisher = {Elsevier BV},
}

@Article{Herman2025,
  author    = {Herman, Allen and Maleki, Roghayeh and Razafimahatratra, Andriaherimanana Sarobidy},
  journal   = {J. Combin. Des.},
  title     = {On the {T}erwilliger Algebra of the Group Association Scheme of the Symmetric Group ${S}ym(7)$},
  year      = {2025},
  issn      = {1520-6610},
  month     = Apr,
  number    = {7},
  pages     = {261--274},
  volume    = {{\bf 33}},
  doi       = {10.1002/jcd.21981},
  publisher = {Wiley},
}

@Article{Sabidussi1958,
  author    = {Sabidussi, Gert},
  journal   = {Proc. Amer. Math. Soc.},
  title     = {On a class of fixed-point-free graphs},
  year      = {1958},
  issn      = {0002-9939},
  month     = Oct,
  number    = {5},
  pages     = {800--804},
  volume    = {{\bf 9}},
  doi       = {10.1090/s0002-9939-1958-0097068-7},
  publisher = {American Mathematical Society (AMS)},
}

@Book{Dixon1996,
  author    = {John D. Dixon and Brian Mortimer},
  publisher = {Springer New York},
  title     = {Permutation Groups},
  year      = {1996},
  doi       = {10.1007/978-1-4612-0731-3},
  issn      = {0072-5285},
  journal   = {Graduate Texts in Mathematics},
}

@Article{ERDOS1961,
  author    = {Erd\H{o}s, P. and Ko, Chao and Rado, R.},
  journal   = {Q. J. Math.},
  title     = {Intersection theorems for systems of finite sets},
  year      = {1961},
  issn      = {1464-3847},
  number    = {1},
  pages     = {313--320},
  volume    = {{\bf 12}},
  doi       = {10.1093/qmath/12.1.313},
  publisher = {Oxford University Press (OUP)},
}

@Book{Hirschfeld1998,
  author    = {J. W. P. Hirschfeld},
  publisher = {Oxford University PressOxford},
  title     = {Projective Geometries over Finite Fields},
  year      = {1998},
  doi       = {10.1093/oso/9780198502951.001.0001},
}

@Article{Meagher2021,
  author    = {Karen Meagher and Andriaherimanana Sarobidy Razafimahatratra and Pablo Spiga},
  journal   = {J. Combin. Theory Ser. A},
  title     = {On triangles in derangement graphs},
  year      = {2021},
  issn      = {0097-3165},
  pages     = {105390},
  volume    = {{\bf 180}},
  doi       = {10.1016/j.jcta.2020.105390},
  publisher = {Elsevier BV},
}

@Article{Hujdurovic2022,
  author    = {Ademir Hujdurović and Klavdija Kutnar and Bojan Kuzma and Dragan Marušič and Štefko Miklavič and Marko Orel},
  journal   = {Finite Fields Appl.},
  title     = {On intersection density of transitive groups of degree a product of two odd primes},
  year      = {2022},
  issn      = {1071-5797},
  pages     = {101975},
  volume    = {{\bf 78}},
  doi       = {10.1016/j.ffa.2021.101975},
  publisher = {Elsevier BV},
}

@Article{Pantangi2025,
  author    = {Venkata Raghu Tej Pantangi},
  journal   = {European J. Combin.},
  title     = {All 3-transitive groups satisfy the strict-{E}rdős–{K}o–{R}ado property},
  year      = {2025},
  issn      = {0195-6698},
  pages     = {104057},
  volume    = {{\bf 124}},
  doi       = {10.1016/j.ejc.2024.104057},
  publisher = {Elsevier BV},
}

@manual{sagemath,
	Key      = {SageMath},
	Author   = {{The Sage Developers}},
	Title    = {{S}ageMath, the {S}age {M}athematics {S}oftware {S}ystem ({V}ersion 10.9)},
	note     = {\url{https://www.sagemath.org}},
	Year     = {2026}
}
	\bibliographystyle{abbrv}
	
\end{document}